\newtheorem{theorem}{Theorem}[section]
\newtheorem{proposition}[theorem]{Proposition}
\newtheorem{corollary}[theorem]{Corollary}
\newtheorem{lemma}[theorem]{Lemma}
\newtheorem{remark}[theorem]{Remark}
\newtheorem{definition}[theorem]{Definition}
\newtheorem{fact}[theorem]{Fact}
\newtheorem{question}[theorem]{Question}
\newcommand{\T}{\mathbb{T}}
\newcommand{\Z}{\mathbb{Z}}
\newcommand{\N}{\mathbb{N}}
\def\T{{\mathbb T}}
\def\eps{{\varepsilon}}
\def\Z{{\mathbb Z}}
\def\N{{\mathbb N}}
\def\R{{\mathbb R}}
\def\Q{{\mathbb Q}}
\def\P{{\mathbb P}}
\def\NB{$\clubsuit$}
\begin{document}

\title[$s$-characterised subgroups]
{Statistically characterized subgroups related to some non-arithmetic sequence of integers}
\subjclass[2010]{Primary: 22B05, Secondary: 11B05, 40A05} \keywords{Circle group, characterized subgroup, Natural density, statistical convergence, s-characterized subgroup, arithmetic sequence}

\author{Pratulananda Das}

\address{Department of Mathematics, Jadavpur University, Kolkata-700032, India}
\email {pratulananda@yahoo.co.in}

\author{Ayan Ghosh}

\address{Department of Mathematics, Jadavpur University, Kolkata-700032, India}
\email {ayanghosh.jumath@gmail.com}

\begin{abstract}
Recently, in Das et al. (Mediterr. J. Math. 21 : 164, 2024), characterized subgroups are investigated for some special kind of non-arithmetic sequences. In this note, we study subsequent problems in case of ``statistically characterized subgroups" introduced in Dikranjan et al. (Fund. Math. 249 : 185-209, 2020). The entire investigation emphasizes that these statistically characterized subgroups are mostly larger in size, having cardinality $\mathfrak{c}$, and exhibit behavior that significantly differs from that of classically characterized subgroups. As a consequence, we solve an open problem raised in Dikranjan et al. (Fund. Math. 249 : 185-209, 2020).
\end{abstract}
\maketitle
%
%

\section{Introduction and background}
Throughout $\R$, $\Q$, $\Z$, $\P$ and $\N$ will stand for the set of all real numbers, the set of all rational numbers,
the set of all integers, the set of primes and the set of all natural numbers respectively. The first three are equipped with their usual abelian group structure and the circle group $\T$ is identified with the quotient group $\R/\Z$ of $\R$ endowed with its usual compact topology.

Following \cite{Ka}, we may identify $\T$ with the interval [0,1] identifying 0 and 1. Any real valued function $f$ defined on $\T$ can be identified with a periodic function defined on the whole real line $\R$ with period 1, i.e., $f(x+1)=f(x)$ for every real $x$. When referring to a set $X\subseteq \T$ we assume that $X\subseteq [0,1]$ and $0\in X$ if and only if $1\in X$. For a real $x$, we denote its fractional part by $\{x\}$ and $\|x\|$ the distance from the integers, i.e., $\min\big\{\{x\},1-\{x\}\big\}$.

One motivation for the exploration of the notion of ``characterized subgroups" can be attributed to the examination of sequences of multiples $(a_n\alpha)$ of a non-torsion element $\alpha$ of the circle group $\T$. And these sequences of multiples have deep roots in Number Theory (Weyl's theorem of uniform distribution modulo 1) and in Ergodic Theory (Sturmian sequences and Hartman sets \cite{Wi}). It also plays crucial roles in the advancement of the structure theory of locally compact abelian groups.

At times, the fascination with similar sets arises from a different perspective. This interest is rooted in Harmonic Analysis and is particularly focused on the scenario where these sequences become small, even null sequences and where there have been extensive usage and subsequent studies of so called ``trigonometric thin sets". One should specifically mention ``A-sets" (short for Arbault sets) in \cite{A1}, which happen to be generated by characterized subgroups as its basis (see  \cite{BuKR,El,Ka} for the details and investigations in those directions).

Before proceeding further let us formally present the definition of a {\em characterized subgroup} of the circle group $\T$.
\begin{definition}
Let $(a_n)$ be a sequence of integers, the subgroup
$$
t_{(a_n)}(\T) := \{x\in \T: a_nx \to 0\mbox{ in } \T\}
$$
of $\T$ is called {\em a characterized} $($by $(a_n))$ {\em subgroup} of $\T$.
\end{definition}

The term {\em characterized} appeared much later, coined in \cite{BDS}. Further it is important to note that for practical purpose it is sufficient to work with sequences of positive integers only which has actually been considered throughout the history. Characterized subgroups of $\T$ have been studied widely by many authors with a significant focus on understanding these subgroups when they arise from arithmetic sequences in one direction. Precisely a sequence of positive integers $(a_n)$ is an
{\em arithmetic sequence} if
$$1 = a_0 < a_1 < a_2 < \dots < a_n < \dots ~~\mbox{and}~a_n|a_{n+1}~ \mbox{for every}~n \in \mathbb{N}.$$
In this case the ratio, defined by $b_n=\frac{a_n}{a_{n-1}}$ for $n>0$ (so that $b_1:=a_1$), is an integer. The sequence $(a_n)$ is called $b$-bounded ($b$-divergent) if the sequence $(b_n)$ is bounded (divergent).


A non-arithmetic sequence $(\zeta_n)$ was defined in \cite{DK} as follows:
\begin{equation}\label{eqnonarith}
1,2,4,6,12, 18, 24,  \ldots, n!, 2\cdot n!, 3 \cdot n!, \ldots , n \cdot n!, (n+1)!, \ldots
\end{equation}
It was proved in \cite{DK} that $t_{(\zeta_n)}(\T) = \Q/\Z$. Motivated by this observation, for an arithmetic sequence $(a_n)$ the following general class of non-arithmetic sequences was introduced in \cite{DG8}. Let $(d_n^{a_n})$ be an increasing sequence of integers formed by the elements of the set,
\begin{equation}\label{nonarithdef}
\{ra_k \ : \ 1\leq r< b_{k+1}\}.
\end{equation}
When there is no confusion regarding the sequence $(a_n)$, we simply denote this sequence by $(d_n)$. Note that for $a_n=n!$ corresponding non-arithmetic sequence $(d_n)$ coincides with the sequence $(\zeta_n)$. A through investigation regarding the subgroup $t_{d_n}(\T)$ can be found in \cite{DG8}. The whole investigation reiterates that these characterized subgroups are infinitely generated unbounded torsion countable subgroup of the circle group $\T$.\\

In many cases the subgroup $t_{(a_n)}(\T)$ is rather small, even if the sequence $(a_n)$ is not too dense. This suggests that asking $a_nx \to 0 $ maybe somewhat too restrictive (as has been pointed out in more details in \cite{DDB}). A very natural instinct should be to consider modes of convergence which are more general than the notion of usual convergence and here the idea of natural density came into picture, as motivated by the above mentioned observation, Dikranjan, Das and Bose \cite{DDB} introduced the notion of  statistically characterized  subgroups of $\T$ by relaxing the condition $a_nx \to 0 $ with the condition $a_nx \to 0 $ statistically.

For $m,n\in\mathbb{N}$ and $m\leq n$, let $[m, n]$ denotes the set $\{m, m+1, m+2,...,n\}$. By
 $|A|$ we denote the cardinality of a set $A$. The lower and the upper natural densities of $A \subset \mathbb{N}$ are defined by
$$
\underline{d}(A)=\displaystyle{\liminf_{n\to\infty}}\frac{|A\cap [0,n-1]|}{n} ~~\mbox{and}~~
\overline{d}(A)=\displaystyle{\limsup_{n\to\infty}}\frac{|A\cap [0,n-1]|}{n}.
$$
If $\underline{d}(A)=\overline{d}(A)$, we say that the natural
density of $A$ exists and it is denoted by $d(A)$.
As usual,
$$
\mathcal{I}_d = \{A \subset \mathbb{N}: d(A) = 0\}
$$
 denotes the ideal of ``natural density zero" sets and $\mathcal{I}_d^*$ is the dual filter i.e. $\mathcal{I}_d^* = \{A \subset \mathbb{N}: d(A) = 1\}$.

Let us now recall the notion of statistical convergence in the sense of \cite{F,Fr,S,St,Z} (see also \cite{B1,B2,MSC} for applications to Number Theory and Analysis).

\begin{definition}\label{Def1}
A sequence of real numbers $(x_n)$ is said to converge to a real number $x_0$ statistically if for any $\eps > 0$, $d(\{n \in \mathbb{N}: |x_n - x_0| \geq \eps\}) = 0$.
\end{definition}

It was proved in \cite{S} that $x_n \to x_0$ statistically  precisely when there exists a subset A of $ \N$ of asymptotic density 0, such that $\displaystyle{\lim_{n \in \N \setminus A}} x_n = x_0$.
Over the years, the notion of statistical convergence has been studied in metric spaces using the metric instead of the modulus and then has been extended to general topological spaces using open neighborhoods \cite{MK}. Over the last three decades a lot of work has been done on the notion of statistical convergence primarily because it extends the notion of usual convergence very naturally preserving many of the basic properties but at the same time including more sequences under its purview.

In order to relax the condition $a_nx \to 0 $ it is thus natural to involve the notion of statistical convergence. More precisely, $a_nx \to 0$ statistically means that\NB\footnote{we are repeating here Def. \ref{Def1}.} for every $\eps > 0$ there exists a subset A of $ \N$ of asymptotic density 0, such that $\|a_nx\| < \eps$ for every $n \not \in A$.
Using this notion we can introduce our main definition:
\begin{definition}
For a sequence of integers $(a_n)$ the subgroup
\begin{equation}\label{def:stat:conv}
t^s_{(a_n)}(\T) := \{x\in \T: a_nx \to 0\  \mbox{ statistically in }\  \T\}
\end{equation}
of $\T$ is called {\em a statistically characterised} (shortly, {\em an s-characterised}) $($by $(a_n))$ {\em subgroup} of $\T$.
\end{definition}

The following  result justifies the investigation of this new notion of s-characterised subgroups as it is established that, though in general, larger in size, these subgroups are still essentially topologically nice.
\begin{theorem}\label{theoremborel}\cite[Theorem A]{DDB}
 $t^s_{(a_n)}(\T)$ is a $F_{\sigma\delta}$ (hence, Borel) subgroup of $\T$ containing $t_{(a_n)}(\T)$.
\end{theorem}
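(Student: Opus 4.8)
The plan is to establish the three assertions of the statement separately: the inclusion $t_{(a_n)}(\T)\subseteq t^s_{(a_n)}(\T)$, the group property of $t^s_{(a_n)}(\T)$, and the $F_{\sigma\delta}$ estimate. The first two are routine. For the inclusion, if $x\in t_{(a_n)}(\T)$ then $\|a_nx\|\to 0$, so for every $\eps>0$ the set $\{n\in\N:\|a_nx\|\geq\eps\}$ is finite and hence of natural density $0$, which is exactly the statement that $a_nx\to 0$ statistically. For the group property I would use that $\|\cdot\|$ is subadditive on $\T$ together with the fact that $\mathcal{I}_d$ is an ideal: clearly $0\in t^s_{(a_n)}(\T)$; the identity $\|a_n(-x)\|=\|a_nx\|$ gives closure under inverses; and for $x,y\in t^s_{(a_n)}(\T)$ and $\eps>0$ the inclusion $\{n:\|a_n(x+y)\|\geq\eps\}\subseteq\{n:\|a_nx\|\geq\eps/2\}\cup\{n:\|a_ny\|\geq\eps/2\}$ realises the left-hand side as a subset of a density-zero set, so $x+y\in t^s_{(a_n)}(\T)$.

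For the topological part, the observation I would exploit is that statistical convergence of $(a_nx)$ to $0$ does not see whether one uses the threshold sets $\{n:\|a_nx\|\geq 1/k\}$ or the strictly smaller sets $\{n:\|a_nx\|>1/k\}$, since $\{n:\|a_nx\|>1/k\}\subseteq\{n:\|a_nx\|\geq 1/k\}\subseteq\{n:\|a_nx\|>1/(k+1)\}$. Consequently
$$t^s_{(a_n)}(\T)=\bigcap_{k\in\N}\Big\{x\in\T:\overline{d}\big(\{n\in\N:\|a_nx\|>1/k\}\big)=0\Big\}.$$
I would then translate the vanishing of the upper density into a convergence statement: writing $B_x^{(k)}=\{n:\|a_nx\|>1/k\}$, the nonnegative sequence $\big(|B_x^{(k)}\cap[0,n-1]|/n\big)_n$ has $\overline{d}(B_x^{(k)})=0$ if and only if it converges to $0$, that is, if and only if for every $j\in\N$ there is an $N$ with $|B_x^{(k)}\cap[0,n-1]|\leq\lceil n/j\rceil-1$ for all $n\geq N$. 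This produces the explicit description
$$t^s_{(a_n)}(\T)=\bigcap_{k\in\N}\bigcap_{j\in\N}\bigcup_{N\in\N}\bigcap_{n\geq N}\Big\{x\in\T:\big|\{m\in[0,n-1]:\|a_mx\|>1/k\}\big|\leq\lceil n/j\rceil-1\Big\}.$$

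Finally I would read off the complexity from this formula. Fix $k,j,n$ and set $t=\lceil n/j\rceil-1$; the innermost set is the set of those $x$ for which at least $n-t$ of the indices $m\in[0,n-1]$ satisfy $\|a_mx\|\leq 1/k$, so it coincides with $\bigcup_{S}\bigcap_{m\in S}\{x\in\T:\|a_mx\|\leq 1/k\}$, the union ranging over the finitely many $S\subseteq[0,n-1]$ with $|S|=n-t$. Since each $\{x\in\T:\|a_mx\|\leq 1/k\}$ is closed (being the preimage of a closed set under the continuous map $x\mapsto\|a_mx\|$), this innermost set is closed; hence $\bigcap_{n\geq N}(\cdots)$ is closed, $\bigcup_{N\in\N}\bigcap_{n\geq N}(\cdots)$ is $F_\sigma$, and the whole set, as a countable intersection of $F_\sigma$ sets, is $F_{\sigma\delta}$, in particular Borel. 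The step that really requires care is the initial passage to strict inequalities: it is exactly this that makes the threshold events contribute closed rather than merely open pieces, and without it the same bookkeeping would only give a $G_{\delta\sigma\delta}$ bound.
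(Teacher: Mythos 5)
The paper does not prove this statement itself: it is quoted verbatim from \cite[Theorem A]{DDB}, so there is no in-paper proof to compare against. Your argument is correct and is essentially the standard one: the algebraic part (subadditivity of $\|\cdot\|$ plus the ideal property of $\mathcal{I}_d$) is routine, and the descriptive part correctly isolates the one delicate point, namely rewriting the counting condition so that the innermost sets become finite unions of finite intersections of the closed sets $\{x:\|a_mx\|\le 1/k\}$, which yields closed sets at the innermost level and hence the $F_{\sigma\delta}$ bound after $\bigcap_{n\ge N}$, $\bigcup_N$, and $\bigcap_{k,j}$.
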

This result seems reasonable enough for further investigation of the notion of $s$-characterized subgroups. However in order to really justify that the theory of $s$-characterized subgroups of $\T$ is worthy of further studies and its investigation may not follow from the existing literature on characterized subgroups, we would present instances of sequences $(d_n)$ with non-coinciding $t^s_{(d_n)}(\T)$ and $ t_{(d_n)}(\T)$.

In this article we have considered subgroups that are generated by using the notion of statistical convergence \cite{F}, an important generalization of the notion of usual convergence using natural density \cite{Bu}. These subgroups have been recently introduced in \cite{DDB} and named ``statistically characterized subgroups". They have been studied for arithmetic sequences in \cite{DDB} and later in \cite{DG} and it has been observed that they generate new subgroups of the circle group which cannot be generated in the classical way (i.e., by using usual convergence). Naturally the question arises regarding the cardinality of the statistically characterized subgroup generated by the sequence $(d_n)$. Here in Theorem \ref{sconth04}, we have shown that for arithmetic sequences satisfying some properties they are always of size $\mathfrak{c}$. This in turn answers an open problem Question 2.18 (posed in \cite{DDB}). As far as the comparison between the statistically characterized subgroups for the sequences $(a_n)$ and $(d_n)$ is concerned, one obtains exactly similar observation like the classical case (Theorem 2.16) for $b$-bounded arithmetic sequences but otherwise the picture seems to be much more complex where no particular conclusion can be drawn.

Throughout the article by $(d_n)$ we would always mean the sequence defined in Eq (\ref{nonarithdef}) corresponding to the arithmetic sequence $(a_n)$ unless otherwise stated.

\vspace{.1cm}
\section{Main results.\vspace{.3cm} \\ }
The primary aim of this section is to understand the behavior of $s$-characterized subgroups for the sequence $(a_n)$ and $(d_n)$ and we will observe in due course that here the situation is much more complicated unlike the classical characterized subgroups.

For arithmetic sequences, the following facts will be used in this sequel time and again. So, before moving onto our main results here we recapitulate that once.
\begin{fact}\label{lemmanew}\cite{DI1}
For any arithmetic sequence $(a_n)$ and $x\in\T$, we can find a unique sequence $c_n\in [0,b_n-1]$ such that
\begin{equation}\label{canonical:repr}
x=\sum\limits_{n=1}^{\infty}\frac{c_n}{a_n},
\end{equation}
where $c_n<b_n-1$ for infinitely many $n$.
\end{fact}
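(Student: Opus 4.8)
\emph{Proof proposal.} The plan is to read the asserted representation as a mixed-radix (Cantor-type) expansion of $x\in[0,1)$ with respect to the ``bases'' $b_n$, the whole argument resting on a single telescoping identity. Since $a_{n-1}<a_n$ and $a_{n-1}\mid a_n$ we have $b_n\ge 2$ for every $n\ge 1$, hence $a_n=\prod_{k=1}^{n}b_k\ge 2^{n}\to\infty$. The identity in question is $(b_n-1)/a_n=b_n/a_n-1/a_n=1/a_{n-1}-1/a_n$, whence
\[
\sum_{n=N}^{\infty}\frac{b_n-1}{a_n}=\frac{1}{a_{N-1}}\qquad(N\ge 1).
\]
This is exactly the analogue of $0.999\ldots=1$, and it is what makes the extra requirement ``$c_n<b_n-1$ for infinitely many $n$'' both necessary and sufficient for uniqueness.

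For existence I would run the greedy algorithm: set $r_0:=x\in[0,1)$ and, having $r_{n-1}\in[0,1/a_{n-1})$, put $c_n:=\lfloor a_n r_{n-1}\rfloor$ and $r_n:=r_{n-1}-c_n/a_n$. From $r_{n-1}<1/a_{n-1}=b_n/a_n$ one gets $0\le c_n\le b_n-1$, while $r_n=\{a_n r_{n-1}\}/a_n\in[0,1/a_n)$; inductively $x=\sum_{k=1}^{n}c_k/a_k+r_n$, and since $r_n<1/a_n\to 0$ we obtain $x=\sum_{n\ge 1}c_n/a_n$ in $\T$. This expansion automatically has $c_n<b_n-1$ infinitely often: if $c_n=b_n-1$ for every $n\ge N$, the displayed identity would give $r_{N-1}=\sum_{n\ge N}(b_n-1)/a_n=1/a_{N-1}$, contradicting $r_{N-1}<1/a_{N-1}$.

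For uniqueness, suppose $\sum_n c_n/a_n=\sum_n c_n'/a_n$ with both sequences obeying $0\le c_n,c_n'\le b_n-1$ and both strict infinitely often, and let $N$ be the least index with $c_N\ne c_N'$, say $c_N<c_N'$. Since $c_N'-c_N\ge 1$ and, by the telescoping identity, $\sum_{n>N}(c_n'-c_n)/a_n\ge -\sum_{n>N}(b_n-1)/a_n=-1/a_N$, we get
\[
0=\frac{c_N'-c_N}{a_N}+\sum_{n>N}\frac{c_n'-c_n}{a_n}\ \ge\ \frac{1}{a_N}-\frac{1}{a_N}=0 ;
\]
so equality holds throughout, and since each term of the tail sum is at least $-(b_n-1)/a_n$, this forces $c_n=b_n-1$ for every $n>N$ --- contradicting the assumption that $(c_n)$ is strict infinitely often. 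Hence $(c_n)=(c_n')$.

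The one point that demands genuine care, and the step I expect to be the main obstacle, is the passage between $\T=\R/\Z$ and the interval $[0,1)$: in the uniqueness argument one must know that the two series agree in $\R$, not merely modulo $\Z$. Both sums lie in $[0,1]$, and two numbers of $[0,1]$ can agree in $\T$ without agreeing in $\R$ only when they are $0$ and $1$; but such a sum equals $1$ precisely when all its digits are maximal, which ``strict infinitely often'' rules out. Once this bookkeeping is in place (together with the matching convention $0\in X\iff 1\in X$ at the endpoint), the telescoping identity does all the real work.
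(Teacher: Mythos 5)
Your proposal is correct, and its existence half is precisely the paper's own argument: the proof given for Fact~\ref{lemmanew} consists only of recalling the greedy construction $c_1=\lfloor a_1x\rfloor$, $c_{k+1}=\lfloor a_{k+1}(x-x_k)\rfloor$ with $x-x_k<\frac{1}{a_k}$, and delegates the digit bounds, the ``$c_n<b_n-1$ infinitely often'' condition, and uniqueness entirely to the cited reference \cite{DI1}. Your telescoping-identity arguments for those remaining points (including the $\T$ versus $[0,1]$ bookkeeping) are sound and simply fill in what the paper leaves to the citation.
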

\begin{proof}
For better clarity we recall the construction of the sequence $(c_n)$. Consider $c_1=\lfloor a_1x\rfloor$, where $\lfloor \ \rfloor$ denotes the integer part. Therefore, $x-\frac{c_1}{a_1}<\frac{1}{a_1}$.

Suppose, $c_1,c_2,\ldots,c_k$ are defined for some $k\geq 1$ with $x_k=\sum\limits_{n=1}^{k}\frac{c_n}{a_n}$ and $x-x_k<\frac{1}{a_k}$. Then the $(k+1)$-th element is defined as $c_{k+1}=\lfloor a_{k+1}(x-x_k)\rfloor$.
\end{proof}

For $x\in\T$ with canonical representation (\ref{canonical:repr}), we define
\begin{itemize}
\item[$\bullet$] $supp_{(a_n)}(x) = \{n\in \N: c_n \neq 0\}$,
\item[$\bullet$] $supp^b_{(a_n)}(x)=\{n\in\N\ : \ c_n=b_n-1\}$.
\end{itemize}
For an arithmetic sequence $(a_n)$, note that for each $j\in\N$,
\begin{equation}\label{eqsum}
\sum\limits_{i=j}^\infty \frac{c_i}{a_i} \leq \sum\limits_{i=j}^\infty \frac{b_i-1}{a_i} = \sum\limits_{i=j}^\infty \bigg(\frac{1}{a_{i-1}} - \frac{1}{a_i} \bigg) \leq \frac{1}{a_{j-1}}.
\end{equation}

\begin{remark}\label{uconr1}
Observe that $(a_n)$ is a subsequence of $(d_n)$. Therefore, we can write $a_k=d_{n_k}$. Now from construction of the sequence $(d_n)$, it is evident that $n_{k+1}-n_k=b_{k+1}-1$ and $d_{(n_k+r-1)}=ra_k$.
\end{remark}

We start with an observation which is in line with \cite[Theorem 2.4]{DG8} that is presented in Theorem \ref{sconth1}. Before that we take note of the following two easy lemmas.

\begin{lemma}\label{sconl00}
For any increasing sequence of integers $(u_n)$, if $d(\{n_k:k\in\N\})=1$ then $t^s_{(u_n)}(\T)= t^s_{(u_{n_k})}(\T)$.
\end{lemma}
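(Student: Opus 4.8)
The plan is to exploit the characterization of statistical convergence recalled after Definition \ref{Def1}: a sequence tends to $0$ statistically precisely when it tends to $0$ along a set of density $1$ (equivalently, off a set of density $0$). Since $(u_{n_k})$ is a subsequence of $(u_n)$, the inclusion $t^s_{(u_n)}(\T)\subseteq t^s_{(u_{n_k})}(\T)$ should be almost immediate: if $x\in t^s_{(u_n)}(\T)$, then for every $\eps>0$ the set $B_\eps=\{n\in\N:\|u_nx\|\geq\eps\}$ has density $0$; I would then observe that $\{k\in\N:\|u_{n_k}x\|\geq\eps\}=\{k:n_k\in B_\eps\}$ is the preimage of $B_\eps$ under the (density-$1$, increasing) indexing map $k\mapsto n_k$, and check that such a preimage has density $0$ in the $k$-variable — this is where the hypothesis $d(\{n_k:k\in\N\})=1$ enters, since it forces $n_k/k\to 1$, so densities computed in the two variables agree up to this asymptotic rescaling.

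For the reverse inclusion $t^s_{(u_{n_k})}(\T)\subseteq t^s_{(u_n)}(\T)$, suppose $x\in t^s_{(u_{n_k})}(\T)$, so for each $\eps>0$ the set $C_\eps=\{k:\|u_{n_k}x\|\geq\eps\}$ has density $0$. I want to conclude that $A_\eps=\{n:\|u_nx\|\geq\eps\}$ has density $0$ in $\N$. The key point is that $A_\eps\subseteq \{n_k:k\in C_\eps\}\cup (\N\setminus\{n_k:k\in\N\})$: any $n$ with $\|u_nx\|\geq\eps$ either fails to be of the form $n_k$ at all, or is $n_k$ for some $k\in C_\eps$. The second set on the right has density $0$ by hypothesis, so it suffices to show $\{n_k:k\in C_\eps\}$ has density $0$. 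Here I would again use $n_k\sim k$ (a consequence of $d(\{n_k\})=1$) to transfer the density-$0$ estimate for $C_\eps$ (in the $k$-index) to a density-$0$ estimate for its image (in the $n$-index): roughly, $|\{n_k:k\in C_\eps\}\cap[0,N-1]|=|C_\eps\cap[0,K_N-1]|$ where $K_N$ is the number of $n_k$ below $N$, and $K_N/N\to 1$, while $|C_\eps\cap[0,K_N-1]|/K_N\to 0$.

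The small technical lemma underpinning both directions is the following standard fact, which I would either cite or prove in one line: \emph{if $S\subseteq\N$ has $d(S)=1$, enumerated increasingly as $S=\{n_k:k\in\N\}$, then $n_k/k\to 1$; consequently, for $E\subseteq\N$ one has $d(E)=0$ iff the ``pulled-back'' index set $\{k:n_k\in E\}$ has density $0$, and $d(\{n_k:k\in T\})=0$ whenever $d(T)=0$ (densities of the latter type being taken with respect to $\N$).} Granting this, both inclusions fall out by the set-theoretic containments above. I do not expect a genuine obstacle here; the only point requiring a little care is bookkeeping the two different index variables ($n$ versus $k$) and making sure the density computations are carried out against the correct ambient interval — the substance is entirely in the elementary observation $n_k\sim k$.
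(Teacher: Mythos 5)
Your argument is correct and complete; the paper itself states this as an ``easy lemma'' and omits the proof entirely, and what you give is exactly the standard argument that was presumably intended (the forward inclusion is moreover a special case of the paper's Lemma~\ref{sconl01}). The key facts you isolate --- that $d(\{n_k\})=1$ forces $n_k/k\to 1$, and that upper density is subadditive so $\overline{d}(A_\eps)\le \overline{d}(\{n_k:k\in C_\eps\})+\overline{d}(\N\setminus\{n_k\})=0$ --- are precisely what is needed, and your density computations against the correct ambient intervals check out.
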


\begin{lemma}\label{sconl01}
For any increasing sequence of integers $(u_n)$, if $\overline{d}(\{n_k:k\in\N\})>0$ then $t^s_{(u_n)}(\T)\subseteq t^s_{(u_{n_k})}(\T)$.
\end{lemma}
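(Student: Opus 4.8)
The plan is to unwind both definitions of statistical convergence and show that the density-zero exceptional set witnessing $x \in t^s_{(u_n)}(\T)$ can be transferred to the index set $\{n_k : k \in \N\}$ while keeping upper density zero there (as a subset of $\N$ reindexed by $k$). So suppose $x \in t^s_{(u_n)}(\T)$; fix $\eps > 0$. Then the set $E_\eps = \{n \in \N : \|u_n x\| \geq \eps\}$ has $d(E_\eps) = 0$. I want to conclude that $F_\eps = \{k \in \N : \|u_{n_k} x\| \geq \eps\}$ has $d(F_\eps) = 0$, which is exactly the statement $x \in t^s_{(u_{n_k})}(\T)$.

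The key step is the counting estimate relating $|F_\eps \cap [0,m-1]|$ to $|E_\eps \cap [0, n_{m-1}]|$ (or a similar bound). Since $k \mapsto n_k$ is increasing, $k \in F_\eps$ with $k \le m-1$ forces $n_k \in E_\eps$ with $n_k \le n_{m-1}$, so $|F_\eps \cap [0,m-1]| \le |E_\eps \cap [0, n_{m-1}+1]|$. Writing $N = \{n_k : k \in \N\}$, we have $|N \cap [0, n_{m-1}+1]| = m$ (essentially), hence
\[
\frac{|F_\eps \cap [0,m-1]|}{m} \le \frac{|E_\eps \cap [0,n_{m-1}+1]|}{n_{m-1}+1} \cdot \frac{n_{m-1}+1}{m} = \frac{|E_\eps \cap [0,n_{m-1}+1]|}{n_{m-1}+1} \cdot \frac{n_{m-1}+1}{|N\cap[0,n_{m-1}+1]|}.
\]
The first factor on the right tends to $0$ along the subsequence $(n_{m-1})$ because $d(E_\eps)=0$; the second factor is $1/$(the relative frequency of $N$ up to $n_{m-1}$). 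Here is where the hypothesis $\overline{d}(N) > 0$ enters: it guarantees that $\limsup_m \frac{|N\cap[0,n_{m-1}+1]|}{n_{m-1}+1} > 0$, i.e. the relevant ratios do not collapse to $0$ — but I need to be a little careful, since $\overline d$ is a $\limsup$ and I am evaluating along the specific subsequence $n_{m-1}$. The cleanest fix is to note that along the points $n_{m-1}$ themselves the counting function of $N$ jumps, so $\frac{|N \cap [0, n_{m-1}+1]|}{n_{m-1}+1} = \frac{m}{n_{m-1}+1}$, and $\limsup_m \frac{m}{n_{m-1}+1} = \overline d(N) > 0$. Therefore the second factor stays bounded away from $0$ along a subsequence of $m$'s, while the product still must tend to $0$ because the first factor tends to $0$ — so in fact $\frac{|F_\eps\cap[0,m-1]|}{m} \to 0$ along that subsequence.

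This last point is the one delicate spot: I actually want $d(F_\eps) = 0$, i.e. convergence to $0$ along \emph{all} $m$, not just a subsequence. So the argument must be organized as: for any $m$, bound $\frac{|F_\eps\cap[0,m-1]|}{m}$ directly by $\frac{|E_\eps \cap [0,n_{m-1}+1]|}{m}$, and then use that $m \le |N\cap[0,n_{m-1}+1]|$ together with $\overline d(N)>0$ only to get a lower bound $\liminf \frac{m}{n_{m-1}+1}$... which $\overline d$ does \emph{not} directly give. I expect the honest resolution is that the inequality $|F_\eps\cap[0,m-1]| \le |E_\eps\cap[0,n_{m-1}+1]|$ combined with $d(E_\eps)=0$ already forces $\frac{|F_\eps\cap[0,m-1]|}{n_{m-1}+1}\to 0$, and then $\overline d(N)>0$ upgrades the denominator: whenever $\frac{|N\cap[0,\ell]|}{\ell}$ is near its $\limsup$, $\ell$ is comparable to $m$, pinning the ratio. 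I would carry out the bookkeeping by fixing $\delta \in (0, \overline d(N))$ and working along the (infinite) set of $m$ where the frequency of $N$ up to $n_{m-1}$ exceeds $\delta$; for the remaining $m$ one compares to the next ``good'' index. The main obstacle, then, is not the idea but this care with $\limsup$ versus $\liminf$ — making sure the transferred exceptional set has density $0$ for \emph{every} $\eps$ and along \emph{every} $m$, rather than merely along a subsequence. Once the counting inequality is packaged correctly this is routine, and the containment $t^s_{(u_n)}(\T) \subseteq t^s_{(u_{n_k})}(\T)$ follows since it holds for each $\eps > 0$ separately.
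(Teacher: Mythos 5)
The paper states this lemma with no proof at all (it is offered as one of ``two easy lemmas''), so there is nothing to compare your argument against line by line; it has to stand on its own, and it does not. Your setup is correct and your instinct about the delicate point is exactly right: writing $N=\{n_k:k\in\N\}$, one has $|F_\eps\cap[1,m]|=|E_\eps\cap N\cap[1,n_m]|$ and $m=|N\cap[1,n_m]|$, so everything reduces to showing that a density-zero set $E_\eps$ has relative density zero inside $N$. The hypothesis $\overline{d}(N)>0$ controls this ratio only along those $m$ for which $|N\cap[1,n_m]|/n_m$ stays above a fixed $\delta>0$, and your proposed repair --- handling the remaining $m$ by ``comparing to the next good index'' --- cannot work, because between two good indices $E_\eps$ may absorb essentially all of the new elements of $N$. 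Concretely, let $(a_j)$ grow very rapidly, put $N=\bigcup_j[a_j,2a_j]$ (so $\overline{d}(N)\geq 1/2$) and $E=\bigcup_j[a_j,a_j+\lfloor a_j/j\rfloor]$. Then $d(E)=0$, yet at the index $m$ with $n_m=a_j+\lfloor a_j/j\rfloor$ the ratio $|E\cap N\cap[1,n_m]|/|N\cap[1,n_m]|$ tends to $1$. Fixing an irrational $x$ and choosing $u_n$ strictly increasing with $\|u_nx\|<1/n$ for $n\notin E$ and $\|u_nx\|>1/4$ for $n\in E$ (possible since $(jx)$ is equidistributed mod $1$) yields $x\in t^s_{(u_n)}(\T)\setminus t^s_{(u_{n_k})}(\T)$. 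So the difficulty you flagged is not a bookkeeping issue: the statement is actually false for arbitrary index sets of positive \emph{upper} density, and no amount of care with the $\limsup$ will close the gap.

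What rescues the lemma is that the right hypothesis is $\underline{d}(N)>0$, under which your very first computation finishes the proof in two lines: for large $m$, $|F_\eps\cap[1,m]|/m\leq |E_\eps\cap[1,n_m]|/|N\cap[1,n_m]|\leq (2/\delta)\cdot|E_\eps\cap[1,n_m]|/n_m\to 0$, where $\delta=\underline{d}(N)$. Every invocation of the lemma in the paper in fact supplies positive lower density: in Theorem \ref{sconth1} the set $A=\{n_k\}$ satisfies $\N\subseteq\bigcup_{i=0}^{M}(A+i)$, whence $\underline{d}(A)\geq 1/(M+1)$, and in Proposition \ref{sconp3} the set $A$ has density $1/2$ at every scale. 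So you should either strengthen the hypothesis to $\underline{d}(\{n_k\})>0$ and run your direct estimate, or verify positive lower density in each application; as written, your proof (and the lemma) cannot be completed.
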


\begin{theorem}\label{sconth1}
If $(a_n)$ is $b$-bounded then $t^s_{(a_n)}(\T)=t^s_{(d_n)}(\T)$.
\end{theorem}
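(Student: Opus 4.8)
The plan is to prove the two inclusions separately, exploiting the structure of $(d_n)$ as recorded in Remark \ref{uconr1} together with the two density lemmas (Lemmas \ref{sconl00} and \ref{sconl01}). First I would establish $t^s_{(d_n)}(\T)\subseteq t^s_{(a_n)}(\T)$. Since $(a_n)=(d_{n_k})$ is a subsequence of $(d_n)$, and $n_{k+1}-n_k=b_{k+1}-1$ is bounded (because $(a_n)$ is $b$-bounded, say $b_n\le M$ for all $n$), the set $\{n_k:k\in\N\}$ has positive lower density — indeed $|\{n_k\}\cap[0,N)|\ge N/M$ roughly — so in particular $\overline d(\{n_k:k\in\N\})>0$. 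Lemma \ref{sconl01} then gives this inclusion immediately.

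The reverse inclusion $t^s_{(a_n)}(\T)\subseteq t^s_{(d_n)}(\T)$ is the substantive direction. Fix $x\in t^s_{(a_n)}(\T)$ and $\eps>0$; I must produce a density-zero set $A\subseteq\N$ with $\|d_mx\|<\eps$ for all $m\notin A$. Index the $d_m$'s by blocks: the $k$-th block consists of $d_{n_k},d_{n_k+1},\dots,d_{n_k+b_{k+1}-2}$, i.e. the multiples $a_k,2a_k,\dots,(b_{k+1}-1)a_k$. The key estimate is to control $\|ra_kx\|$ for $1\le r<b_{k+1}$ in terms of a fixed finite window of the tail $\sum_{i>k}c_i/a_i$ and the "high" coordinates; using the canonical representation of Fact \ref{lemmanew} one writes $a_kx\equiv\sum_{i>k}c_i/a_i \pmod 1$ with $\sum_{i>k}c_i/a_i\le 1/a_k$ by \eqref{eqsum}, and more generally $ra_kx\equiv r\sum_{i>k}c_i/a_i\pmod 1$. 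Since $x\in t^s_{(a_n)}(\T)$, for a density-one set of indices $k$ we have $\|a_kx\|$ small; but to get $\|ra_kx\|$ small for \emph{all} $r<b_{k+1}\le M$ simultaneously I would choose $\eps'=\eps/M$, take the density-zero exceptional set $A_0$ for $\eps'$ provided by statistical convergence of $(a_kx)$, and observe that for $k\notin A_0$, whenever $a_kx$ is within $\eps'$ of $0$ on the correct side (i.e. $\sum_{i>k}c_i/a_i<\eps'$, which is what $\|a_kx\|<\eps'$ plus the bound \eqref{eqsum} forces once $a_k$ is large), we get $\|ra_kx\|\le r\sum_{i>k}c_i/a_i<M\eps'=\eps$ for every $r<b_{k+1}$. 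Then set $A=\bigcup_{k\in A_0}[n_k,n_k+b_{k+1}-1)$, which is a density-zero subset of $\N$ because each block has length at most $M-1$ and blocks indexed by a density-zero set of $k$'s occupy a density-zero portion of $\N$ (here one uses boundedness of block lengths to pass from density zero in the $k$-indexing to density zero in the $m$-indexing — this is essentially a quantitative form of Lemma \ref{sconl00}).

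The main obstacle I anticipate is the passage from $\|a_kx\|<\eps'$ to the one-sided bound $\sum_{i>k}c_i/a_i<\eps'$, and the careful bookkeeping that $r\|a_kx\|$ (rather than $\|ra_kx\|\le r\|a_kx\|$, which is always true) is genuinely what governs the block: one has to be mildly careful that $\sum_{i>k}c_i/a_i$ is already in $[0,1/a_{k})$ so that no wraparound modulo $1$ occurs when multiplying by $r<b_{k+1}$, at least once $k$ is large enough that $b_{k+1}/a_k$ is small; the finitely many small $k$ can be absorbed into $A$. Besides that, the only thing to verify is the density computation for $A$, which is routine given $b_n\le M$: $|A\cap[0,N)|\le (M-1)\,|A_0\cap\{k:n_k<N\}|$ and $\{k:n_k<N\}$ has about $N/(\text{average block length})$ elements, so $d(A_0)=0$ yields $d(A)=0$. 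Assembling these pieces gives $x\in t^s_{(d_n)}(\T)$, completing the proof.
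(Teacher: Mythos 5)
Your proposal is correct and follows essentially the same route as the paper: positive upper density of $\{n_k\}$ plus Lemma \ref{sconl01} for one inclusion, and for the other the choice $\eps'=\eps/M$, the block decomposition $d_{n_k+r-1}=ra_k$, the bound $\|ra_kx\|\le r\|a_kx\|<\eps$, and the observation that density-zero exceptional blocks of bounded length occupy a density-zero set of indices. The wraparound worry you flag is unnecessary, since (as you yourself note) $\|ry\|\le r\|y\|$ holds unconditionally, which is all the paper uses.
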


\begin{proof}
If $(a_n)$ is $b$-bounded then we can choose $M\in\N$ such that $2\leq b_n\leq M$ for each $n\in \N$. In view of Remark \ref{uconr1}, we also have
\begin{equation}\label{sconeq1}
a_k=d_{n_k} \ \mbox{   and   }  \  n_{k+1}-n_k=b_{k+1}-1 < M.
\end{equation}
\begin{itemize}
\item[$\bullet$]  Let us write $A=\{n_k:k\in\N\}$. If possible assume that $d(A)=0$. Now Eq (\ref{sconeq1}) implies that $\N\subseteq\bigcup\limits_{i=0}^M A+i$ and consequently we must have $d(\N)\leq \sum\limits_{i=0}^M d(A+i)=0$ $-$ which is a contradiction. So, we conclude that $\overline{d}(A)>0$. Therefore Lemma \ref{sconl01} ensures that $t^s_{(d_n)}(\T)\subseteq t^s_{(a_n)}(\T)$.

\item[$\bullet$]  Next let $x\in t^s_{(a_n)}(\T)$ and $\varepsilon >0$ be given. Then there exists $A'\subseteq \N$ with $d(A')=0$ and for all $n\in\N\setminus A'$ we have $\|a_nx\|<\frac{\varepsilon}{M}$.\\
    Let us write $B'=\{n_k:k\in A'\}$. Then it is evident that $n_k\geq k$. Now observe that
    \begin{eqnarray*}
    \overline{d}(B') &=& \limsup\limits_{n\to\infty} \frac{|\{j\in\N: j\leq n \mbox{  and  } j\in B'\}|}{n} \\ &\leq& \limsup\limits_{n\to\infty} \frac{|\{j\in\N: j\leq n \mbox{  and  } j\in A'\}|}{n} \\ &=& \overline{d}(A')=0.
    \end{eqnarray*}
    We define $B=\bigcup\limits_{i=0}^M B'+i$. Clearly $d(B)=0$. Now observe that whenever $i\notin B$ we have $i=n_{k}+r-1$ for some $k\in \N\setminus A'$.
    Therefore, for all $i\in \N\setminus B$ we have
    $$
    \|d_ix\|=\|d_{n_{k}+r-1}x\| = \|ra_kx\|= r\|a_kx\|<\frac{(M-1)\varepsilon}{M}<\varepsilon.
    $$
    So we can conclude that $x\in t^s_{(d_n)}(\T)$. Evidently we then have $t^s_{(a_n)}(\T)\subseteq t^s_{(d_n)}(\T)$.
\end{itemize}
Thus we conclude that $t^s_{(a_n)}(\T)=t^s_{(d_n)}(\T)$.
\end{proof}

In the rest of this section we investigate the relation between the s-characterized subgroups corresponding to the sequences $(a_n)$ and $(d_n)$ when the sequence is not $b$-bounded. The following results demonstrate that it is not possible to arrive at any concrete conclusion, thus justifying the importance of just proved Theorem \ref{sconth1}.

However as a clear departure from the case of characterized subgroups, the condition ``$b$-boundedness" of the sequence $(a_n)$ is not necessary for the equality here as is demonstrated in the next result.
\begin{proposition}\label{sconp2}
There exists an arithmetic sequence $(a_n)$ such that $(a_n)$ is not $b$-bounded but $t^s_{(a_n)}(\T) = t^s_{(d_n)}(\T)$.
\end{proposition}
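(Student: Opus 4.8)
The plan is to build $(a_n)$ so that $b_n \to \infty$, hence not $b$-bounded, but so that the non-arithmetic "filler" terms $ra_k$ with $2 \le r < b_{k+1}$ occupy only a density-zero set of indices in the sequence $(d_n)$. The key observation from Remark \ref{uconr1} is that between the positions $n_k$ and $n_{k+1}$ of $a_k$ and $a_{k+1}$ in $(d_n)$ there are exactly $b_{k+1}-1$ indices, so the number of "extra" indices up to position $n_K$ is $\sum_{k \le K}(b_{k+1}-2)$, while $n_K = \sum_{k\le K}(b_{k+1}-1) + O(1)$. So I want $\sum_{k\le K}(b_{k+1}-1)$ to grow, but the set $A = \{n_k : k \in \N\}$ (the positions of the $a_k$'s inside $(d_n)$) to still have $\overline d(A) > 0$; in fact I will aim for $d(A) = 1$, which via Lemma \ref{sconl00} immediately gives $t^s_{(d_n)}(\T) = t^s_{(d_{n_k})}(\T) = t^s_{(a_n)}(\T)$. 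For $d(A) = 1$ it suffices that the gaps $b_{k+1}-1$ be $o$ of the running index, e.g. take $b_n$ growing slowly enough that $b_{k+1} - 1 = o(k)$; a concrete choice such as $b_n = $ the largest power of $2$ below $\log(n+2)$, or even simpler, $b_n$ constant on long blocks and increasing rarely, works. Then $n_k = \sum_{j \le k}(b_{j+1}-1) \sim k$ when the $b$'s grow sub-linearly, and the complement of $A$ within $[0, n_k]$ has size $\sum_{j\le k}(b_{j+1}-2) = o(k) = o(n_k)$, so $d(A) = 1$.

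So the concrete steps are: (1) fix an unbounded but slowly increasing integer sequence $(b_n)$ with $b_n \ge 2$ and $\sum_{j \le k} b_j = o(k)$ — one may simply repeat each value of $b$ many times, increasing only on a very sparse set of indices, which makes the partial-sum estimate transparent; (2) define $a_0 = 1$, $a_n = a_{n-1} b_n$, so $(a_n)$ is an arithmetic sequence that is not $b$-bounded; (3) form $(d_n)$ from $(a_n)$ as in Eq. (\ref{nonarithdef}), and record via Remark \ref{uconr1} that the positions $n_k$ of the $a_k$ satisfy $n_{k+1} - n_k = b_{k+1} - 1$, hence $n_k = \sum_{j=1}^{k}(b_{j+1}-1) + c$ for a small constant $c$; (4) show $d(\{n_k : k \in \N\}) = 1$ by estimating, for $n_k \le N < n_{k+1}$, the count $|\{n_j : n_j \le N\}| = k+O(1)$ against $N \ge n_k = \sum_{j\le k}(b_{j+1}-1)$, and using $\sum_{j \le k} b_j = o(k)$ to conclude the ratio tends to $1$; (5) invoke Lemma \ref{sconl00} to get $t^s_{(d_n)}(\T) = t^s_{(a_n)}(\T)$.

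The one genuinely delicate point — and the part I would write out with care — is arranging the arithmetic of $(b_n)$ so that simultaneously $b_n \to \infty$ (needed for "not $b$-bounded") and $\frac{1}{k}\sum_{j=1}^{k} b_j \to 0$ (needed for $d(A) = 1$). These are compatible precisely because $b_n$ may tend to infinity arbitrarily slowly: choosing $b_n = m$ for all $n$ in a block of length $\ge 4^m$ (say $b_n = m$ whenever $4^{m-1} \le $ (block counter) $< 4^m$), the contribution of the terms equal to $m$ to $\sum_{j\le k} b_j$ is at most $m \cdot (\text{length of that block})$, and since consecutive block lengths grow geometrically, $\sum_{j\le k} b_j$ is dominated by its last block, which is $\le (\log_4 k + 1)\cdot k \cdot$(fraction)$\ldots$ — so after fixing the exact growth rate one verifies $\sum_{j \le k} b_j = O(k \log\log k / \text{something})$, in any case $o(k^2)$ is automatic and $o(k)$ is achieved by making each block long enough relative to its $b$-value. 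I would therefore state the block construction explicitly, check $b_n \to \infty$ (trivial, values go $1,2,3,\dots$) and check the Cesàro estimate on the block where $k$ lands, and everything else is a direct application of Remark \ref{uconr1} and Lemma \ref{sconl00}. Note that one should also double-check the base case and indexing ($a_0 = 1$, $b_1 := a_1$, the "$c$ for infinitely many $n$" clause of Fact \ref{lemmanew} is irrelevant here since we only use the structural Remark \ref{uconr1}), but no further obstruction arises.
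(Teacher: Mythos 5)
Your overall strategy is the same as the paper's: arrange for the set $A=\{n_k:k\in\N\}$ of positions of $(a_n)$ inside $(d_n)$ to have $d(A)=1$ and then invoke Lemma \ref{sconl00}. But your concrete construction cannot work, and the obstruction is not a detail you can ``write out with care'' later --- it is an impossibility. Since every $b_j\geq 2$, you have $\sum_{j\leq k}b_j\geq 2k$, so the condition $\sum_{j\leq k}b_j=o(k)$ (equivalently $\frac{1}{k}\sum_{j\leq k}b_j\to 0$) that you declare ``needed for $d(A)=1$'' is unsatisfiable by any arithmetic sequence whatsoever. The correct requirement, using $n_k=n_0+\sum_{i=1}^{k}(b_i-1)$ from Remark \ref{uconr1}, is that $\frac{1}{k}\sum_{i=1}^{k}(b_i-1)\to 1$, i.e.\ that $b_i=2$ ``on average.'' This is incompatible with your stated plan of making $b_n\to\infty$: if $b_n\to\infty$ then the Ces\`{a}ro means $\frac1k\sum_{i\le k}(b_i-1)$ also diverge, so $k/n_k\to 0$ and in fact $\overline{d}(A)=0$, which kills both Lemma \ref{sconl00} and Lemma \ref{sconl01}. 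Your block construction ($b_n=m$ on a block of length $4^m$) has exactly this defect: up to the end of block $M$ one gets $n_k\approx (M-1)k$, hence density zero.

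The fix --- and what the paper actually does --- is to remember that ``not $b$-bounded'' only means $(b_n)$ is \emph{unbounded}, not that $b_n\to\infty$. The paper sets $b_i=2$ for all $i$ outside a very sparse set of indices $\{s_j+1\}$, where it lets $b_{s_j+1}=g_{j+1}-h_j+1\to\infty$; the sparseness is calibrated (e.g.\ gaps of length $n$ between blocks of length $n^2$) so that the exceptional contributions $\sum (b_{s_j+1}-1)$ have density zero inside $[1,n_k]$, giving $d(A)=1$ while $(b_n)$ is still unbounded. If you replace ``$b_n\to\infty$ slowly'' by ``$b_n=2$ except on a sparse set where it is large,'' the rest of your argument (Remark \ref{uconr1} plus Lemma \ref{sconl00}) goes through as in the paper.
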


\begin{proof}
We construct $A\subseteq\N$ in the following way,
$$
A=\bigcup\limits_{n=1}^\infty [g_n,h_n] \mbox{  and  } \ g_1=1, g_n\leq h_n, |g_{n+1}-h_n|\to\infty \mbox{  and  } d(A)=1
$$
(Note that such a set can always be constructed by taking $|g_{n+1}-h_n|=n$ and $|h_{n}-g_n|=n^2$). Now let us write $A=\{1=n_0<n_1<n_2<\ldots<n_k<\ldots\}$. Then there exists a sequence $(s_n)$ such that $n_{s_j}= h_j$ and $n_{s_j+1}= g_{j+1}$. For each $k\in\N\cup \{0\}$, we define
$$
b_{k+1}=n_{k+1}-n_k +1.
$$
Then for each $i\in\N$, $b_i$ can be written as
$$
b_i=
  \begin{cases}
   g_{j+1}-h_j+1 & \text{if} \ i=s_j+1, \\
   2 & \text{otherwise}.
  \end{cases}
$$
Now the corresponding arithmetic sequence $(a_n)$ is defined by
$$
a_0=1 \mbox{  and  }  a_{n+1}=b_{n+1}a_n.
$$
Note that from the construction of $(b_n)$, it follows that $(a_n)$ is not $b$-bounded and $a_k=d_{n_k}$. Since $d(A)=1$, Lemma \ref{sconl00} ensures that $t^s_{(a_n)}(\T)=t^s_{(d_n)}(\T)$.
\end{proof}

We would like to introduce a condition on an element $x\in\T$ which could ensure that $x\in t^s_{(a_n)}(\T)$ but not in $t^s_{(d_n)}(\T)$. As $t_{(a_n)}(\T)\subsetneq t^s_{(a_n)}(\T)$ we will see in the next result that it is possible to in fact choose $x\in t_{(a_n)}(\T)$ which is outside $t^s_{(d_n)}(\T)$, thus presenting a stronger observation.
\begin{lemma}\label{sconl1}
Let $(a_n)$ be an arithmetic sequence and $x\in \T$ be such that
$$
supp_{(a_n)}(x)=\{s_j+1 : j\in\N \} \ \mbox{  and  } \ c_n=1 \ \mbox{  for all  } \  n\in supp_{(a_n)}(x).
$$
If $supp_{(a_n)}(x)$ is $b$-divergent and $\overline{d} \big(\bigcup\limits_{j=1}^\infty [n_{s_j},n_{s_j}+b_{s_j+1}-2]\big)>0$, then $x\in t_{(a_n)}(\T)\setminus t^s_{(d_n)}(\T)$.
\end{lemma}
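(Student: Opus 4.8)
My plan is to verify the two memberships $x\in t_{(a_n)}(\T)$ and $x\notin t^s_{(d_n)}(\T)$ in turn. For the first I would argue directly from the canonical representation, which here is $x=\sum_{j\ge 1}\frac{1}{a_{s_j+1}}$. Fixing $m$ and reducing $a_mx$ modulo $1$, the summands with $s_j+1\le m$ are killed, so $a_mx\equiv\sum_{s_j\ge m}\frac{a_m}{a_{s_j+1}}\pmod 1$. If $j(m)$ denotes the least $j$ with $s_j\ge m$, then $s_{j(m)}+1$ is the smallest element of $supp_{(a_n)}(x)$ exceeding $m$, so in the product $a_{s_{j(m)}+1}/a_m=b_{m+1}\cdots b_{s_{j(m)}+1}$ only the last factor can be large; combining $b_i\ge 2$ with the geometric decay of the tail (as in (\ref{eqsum})) yields $\|a_mx\|\le 2/b_{s_{j(m)}+1}$. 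Since $j(m)\to\infty$ as $m\to\infty$ and $supp_{(a_n)}(x)$ is $b$-divergent, $b_{s_{j(m)}+1}\to\infty$, hence $\|a_mx\|\to 0$ and $x\in t_{(a_n)}(\T)$.

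For the second membership I would set $\eps=\tfrac13$ and work block by block: put $D_j=[\,n_{s_j},\,n_{s_j}+b_{s_j+1}-2\,]$, so that (Remark \ref{uconr1}) for $n=n_{s_j}+r-1\in D_j$ with $1\le r\le b_{s_j+1}-1$ one has $d_nx=ra_{s_j}x$. The computation above applied at $m=s_j$ gives $a_{s_j}x\equiv\theta_j\pmod 1$ with leading term exactly $a_{s_j}/a_{s_j+1}=1/b_{s_j+1}$ and a strictly positive tail below $1/b_{s_j+1}$, hence $1/b_{s_j+1}<\theta_j<2/b_{s_j+1}$. Therefore $\|d_nx\|=\|r\theta_j\|$, and as $r$ runs through $1,\dots,b_{s_j+1}-1$ the numbers $r\theta_j$ form an arithmetic progression of step $\theta_j$ sweeping the interval $[\theta_j,(b_{s_j+1}-1)\theta_j]$, which for $b_{s_j+1}$ large contains $[\tfrac13,\tfrac23]$. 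Since $\|y\|\ge\tfrac13$ for $y\in[\tfrac13,\tfrac23]$, counting the progression points in $[\tfrac13,\tfrac23]$ shows $|\{\,n\in D_j:\|d_nx\|\ge\tfrac13\,\}|\ge\tfrac1{12}|D_j|$ for all $j$ past some $j_0$, which exists because $supp_{(a_n)}(x)$ is $b$-divergent.

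Finally I would pass to densities. With $S=\{\,n\in\N:\|d_nx\|\ge\tfrac13\,\}$ and $D=\bigcup_jD_j$, the hypothesis is $\overline d(D)>0$, and I claim $\overline d(S)\ge\tfrac1{12}\overline d(D)>0$, which gives $x\notin t^s_{(d_n)}(\T)$. The blocks $D_j$ are pairwise disjoint and increasing, so $N\mapsto|D\cap[0,N-1]|/N$ increases while $N$ traverses a block and decreases across a gap; hence $\overline d(D)$ is already attained along the right endpoints $e_J:=n_{s_J}+b_{s_J+1}-1\;(=n_{s_J+1})$, for which $D\cap[0,e_J-1]=\bigcup_{j\le J}D_j$ is a union of complete blocks. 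Then $|S\cap[0,e_J-1]|\ge\sum_{j_0\le j\le J}|S\cap D_j|\ge\tfrac1{12}\sum_{j\le J}|D_j|-O(1)$, and dividing by $e_J$ and taking the limsup along a sequence of $J$'s realising $\overline d(D)$ gives the claim. The hard part is the middle step: one must pin down $a_{s_j}x$ modulo $1$ precisely enough — comparable to $1/b_{s_j+1}$ from both sides — so that its dilates $ra_{s_j}x$ with $r<b_{s_j+1}$ provably keep a fixed positive proportion of indices $\tfrac13$-far from $0$; the density bookkeeping of the last step (in particular the treatment of the block straddling $N$) is a secondary technical point, and the estimates of the first step are routine.
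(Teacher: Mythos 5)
Your proposal is correct and follows essentially the same route as the paper: both arguments pin $\{a_{s_j}x\}$ strictly between $1/b_{s_j+1}$ and $2/b_{s_j+1}$ and then isolate a fixed positive proportion of multipliers $r<b_{s_j+1}$ (the paper takes the explicit window $r\in[\lfloor b_{s_j+1}/6\rfloor,\lfloor b_{s_j+1}/3\rfloor]$, you count progression points of step $\theta_j$ in $[\frac{1}{3},\frac{2}{3}]$) for which $\{d_nx\}=r\{a_{s_j}x\}$ stays in an interval bounded away from $0$ and $1$, yielding positive upper density of indices with $\|d_nx\|\geq\frac{1}{12}$. The only cosmetic differences are that for $x\in t_{(a_n)}(\T)$ the paper cites \cite[Corollary 3.4]{DI1} where you argue directly from the canonical expansion, and that your right-endpoint bookkeeping for $\overline{d}$ spells out a comparison the paper asserts as evident.
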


\begin{proof}
Let $x\in \T$ be such that
$$
supp_{(a_n)}(x)=\{s_j+1 : j\in\N \} \ \mbox{  and  } \ c_n=1 \ \mbox{  for all  } \  n\in supp_{(a_n)}(x),
$$
where further $supp_{(a_n)}(x)$ is $b$-divergent. Since $\lim\limits_{n\to\infty} \frac{c_n}{b_n} =0$, \cite[Corollary 3.4]{DI1} ensures that $x\in t_{(a_n)}(\T)$.\\
Now proceeding as in , we get
\begin{equation}\label{eqmain2}
\frac{r}{b_{s_j+1}} \leq r\big\{ a_{s_j} x  \big\} \leq \frac{2r}{b_{s_j+1}}.
\end{equation}
Since $supp_{(a_n)}(x)$ is $b$-divergent, the sequence $(b_{s_j+1})$ must diverge to infinity, without loss of any generality we can assume that $b_{s_j+1}\geq 12$ for each $j\in\N$. For each $j\in\N$, let us choose $r\in [\lfloor\frac{b_{s_j+1}}{6}\rfloor,\lfloor\frac{b_{s_j+1}}{3}\rfloor]$, i.e., $$\frac{b_{s_j+1}}{6}-1< r \leq \frac{b_{s_j+1}}{3}.$$
We set
$$
B=\bigcup\limits_{j=1}^\infty [n_{s_j}+\lfloor\frac{b_{s_j+1}}{6}\rfloor,n_{s_j}+\lfloor\frac{b_{s_j+1}}{3}\rfloor].
$$
It is evident that $\overline{d}(B)\geq \frac{1}{6}\overline{d} \big(\bigcup\limits_{j=1}^\infty [n_{s_j},n_{s_j}+b_{s_j+1}-2]\big)>0$. Now observe that for each $i\in B$ we have $i=n_{s_j}+r-1$ for some $r\in [\lfloor\frac{b_{s_j+1}}{6}\rfloor,\lfloor\frac{b_{s_j+1}}{3}\rfloor]$. Therefore for all $i\in B$, from Eq (\ref{eqmain2}) we have
$$
\{d_ix\}=\{d_{n_{s_j}+r-1}x\} = \{ra_{s_j}x\}\leq \frac{2r}{b_{s_j+1}}\leq \frac{2}{3},
$$
and
$$
\{d_ix\}= \{ra_{s_j}x\}\geq \frac{r}{b_{s_j+1}}> \frac{1}{6}-\frac{1}{b_{s_j+1}}\geq \frac{1}{12}.
$$
Since $\overline{d}(B)>0$ and for each $i\in B$ we have $\{d_ix\}\in[\frac{1}{12},\frac{2}{3}]$, we conclude that $x\notin t^s_{(d_n)}(\T)$. Thus, $x\in t_{(a_n)}(\T)\setminus t^s_{(d_n)}(\T)$.
\end{proof}

\begin{proposition}\label{sconp3}
There exists an arithmetic sequence $(a_n)$ such that $(a_n)$ is not $b$-bounded and $t^s_{(d_n)}(\T) \subsetneq t^s_{(a_n)}(\T)$.
\end{proposition}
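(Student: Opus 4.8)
The plan is to combine the two inclusion lemmas with Lemma~\ref{sconl1}: we build an arithmetic sequence $(a_n)$ that is not $b$-bounded such that (i) the set $A$ of positions of $(a_n)$ inside $(d_n)$ has positive upper density — so Lemma~\ref{sconl01} applied to $(u_n)=(d_n)$, whose subsequence $(d_{n_k})$ is exactly $(a_k)$, gives $t^s_{(d_n)}(\T)\subseteq t^s_{(a_n)}(\T)$ — while simultaneously (ii) the ``long multiple blocks'' $a_k,2a_k,\dots,(b_{k+1}-1)a_k$, over the indices $k$ where $b_{k+1}$ is large, fill a set of positive upper density inside $(d_n)$, which lets us produce an element $x$ meeting the hypotheses of Lemma~\ref{sconl1}, hence lying in $t_{(a_n)}(\T)\setminus t^s_{(d_n)}(\T)$. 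Since $t_{(a_n)}(\T)\subseteq t^s_{(a_n)}(\T)$ by Theorem~\ref{theoremborel}, such an $x$ witnesses that the inclusion in (i) is strict. The one real tension — and the thing to balance — is between (i), which wants $A$ large, and (ii), which wants its complement large; a set with density bounded away from both $0$ and $1$ does both, provided the gaps still tend to infinity so that the support of $x$ is $b$-divergent.

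For the construction I would mimic Proposition~\ref{sconp2}. Pick $A=\bigcup_{n\ge1}[g_n,h_n]\subseteq\N$ with $1=g_1\le h_1<g_2\le h_2<\cdots$, taking the $n$-th interval $[g_n,h_n]$ and the $n$-th gap $(h_n,g_{n+1})$ both of length $n$; then $d(A)=1/2$, so $\overline d(A)=\overline d(\N\setminus A)=1/2>0$, and $g_{n+1}-h_n=n+1\to\infty$. Write $A=\{1=n_0<n_1<\cdots\}$, let $(s_j)$ be the indices with $n_{s_j}=h_j$ and $n_{s_j+1}=g_{j+1}$, set $b_{k+1}=n_{k+1}-n_k+1$ (so $b_{s_j+1}=g_{j+1}-h_j+1=j+2$ and $b_i=2$ otherwise), and define $a_0=1$, $a_{n+1}=b_{n+1}a_n$. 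Then $(a_n)$ is a genuine arithmetic sequence, it is not $b$-bounded because $b_{s_j+1}=j+2\to\infty$, and — exactly as in Remark~\ref{uconr1} — one has $a_k=d_{n_k}$, the terms of $(d_n)$ at positions $[n_k,n_{k+1}-1]$ being $a_k,2a_k,\dots,(b_{k+1}-1)a_k$.

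Now (i) is immediate: $\overline d(\{n_k:k\in\N\})=\overline d(A)>0$, so Lemma~\ref{sconl01} gives $t^s_{(d_n)}(\T)\subseteq t^s_{(a_n)}(\T)$. For (ii), take $x\in\T$ with canonical representation $c_n=1$ for $n\in\{s_j+1:j\in\N\}$ and $c_n=0$ otherwise; this is admissible by Fact~\ref{lemmanew} since $1\le b_{s_j+1}-1$ and $c_n=0<b_n-1$ for infinitely many $n$, and it makes $supp_{(a_n)}(x)=\{s_j+1:j\in\N\}$. This support is $b$-divergent because $b_{s_j+1}=j+2\to\infty$ (in particular $b_{s_j+1}\ge12$ eventually, all that Lemma~\ref{sconl1} needs), and
\[
\overline d\!\Big(\bigcup_{j\ge1}\big[n_{s_j},\,n_{s_j}+b_{s_j+1}-2\big]\Big)=\overline d\!\Big(\bigcup_{j\ge1}[h_j,\,g_{j+1}-1]\Big)\ \ge\ \overline d(\N\setminus A)=1/2>0 .
\]
Hence Lemma~\ref{sconl1} yields $x\in t_{(a_n)}(\T)\setminus t^s_{(d_n)}(\T)\subseteq t^s_{(a_n)}(\T)\setminus t^s_{(d_n)}(\T)$, so together with (i) we conclude $t^s_{(d_n)}(\T)\subsetneq t^s_{(a_n)}(\T)$.

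I expect no serious obstacle beyond the density balancing described above; the only points needing a little care are verifying $a_k=d_{n_k}$ for this choice of $(b_n)$ (routine, as in Proposition~\ref{sconp2}) and the displayed density inequality, which holds because $\bigcup_j[h_j,g_{j+1}-1]=\{h_j:j\in\N\}\cup(\N\setminus A)$ by maximality of the intervals $[g_n,h_n]$.
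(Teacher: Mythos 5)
Your proposal is correct and follows essentially the same route as the paper's own proof: the same interval-and-gap construction of $A$ with both $\overline d(A)>0$ and $\overline d(\N\setminus A)>0$, Lemma~\ref{sconl01} for the inclusion $t^s_{(d_n)}(\T)\subseteq t^s_{(a_n)}(\T)$, and Lemma~\ref{sconl1} applied to the element supported on $\{s_j+1\}$ to witness strictness. The only cosmetic difference is your explicit choice of equal block and gap lengths giving $d(A)=1/2$, where the paper simply notes the same choice in passing.
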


\begin{proof}
We consider $A\subseteq\N$ such that
$$
A=\bigcup\limits_{n=1}^\infty [g_n,h_n] \mbox{  and  } \ g_1=1, g_n\leq h_n, |g_{n+1}-h_n|\to\infty \mbox{  and  } \overline{d}(A)>0,\overline{d}(\N\setminus A)>0
$$
(Note that such a set can be constructed by taking $|g_{n+1}-h_n|=|h_{n}-g_n|=n$). Now let us write $A=\{1=n_0<n_1<n_2<\ldots<n_k<\ldots\}$. Then there exists a sequence $(s_n)$ such that $n_{s_j}= h_j$ and $n_{s_j+1}= g_{j+1}$. For each $k\in\N\cup \{0\}$, we define
$$
b_{k+1}=n_{k+1}-n_k +1.
$$
Then for each $i\in\N$, $b_i$ can be written as
$$
b_i=
  \begin{cases}
   g_{j+1}-h_j+1 & \text{if} \ i=s_j+1, \\
   2 & \text{otherwise}.
  \end{cases}
$$
Now, the corresponding arithmetic sequence $(a_n)$ is defined by
$$
a_0=1 \mbox{  and  }  a_{n+1}=b_{n+1}a_n.
$$
From the construction of $(b_n)$, it follows that $(a_n)$ is not $b$-bounded and $a_k=d_{n_k}$. Since $\overline{d}(A)>0$, Lemma \ref{sconl01} ensures that $t^s_{(d_n)}(\T)\subseteq t^s_{(a_n)}(\T)$.

Now let us choose $x\in \T$ in the following way,
$$
supp_{(a_n)}(x)=\{s_j+1 : j\in\N \} \ \mbox{  and  } \ c_n=1 \ \mbox{  for all  } \  n\in supp_{(a_n)}(x).
$$
Observe that $supp_{(a_n)}(x)$ is $b$-divergent. Since $\N\setminus A \subseteq \bigcup\limits_{j=1}^\infty [n_{s_j},n_{s_j}+b_{s_j+1}-2]$, we also have
$$
\overline{d} \big(\bigcup\limits_{j=1}^\infty [n_{s_j},n_{s_j}+b_{s_j+1}-2]\big)\geq\overline{d}(\N\setminus A)>0.
$$
Therefore, Lemma \ref{sconl1} ensures that $x\in t_{(a_n)}(\T)\setminus t^s_{(d_n)}(\T)$. Since $ t_{(a_n)}(\T)\subseteq t^s_{(a_n)}(\T)$, we conclude that $x\in t^s_{(a_n)}(\T)\setminus t^s_{(d_n)}(\T)$. Thus, $t^s_{(d_n)}(\T) \subsetneq t^s_{(a_n)}(\T)$.
\end{proof}

Our final results of this section show that in case of arithmetic sequence $(a_n)$ which are not $q$-bounded, both inclusions between the two s-characterized subgroups $t^s_{(a_n)}(\T)$ and $t^s_{(d_n)}(\T)$ may not hold (compare this with \cite[Corollary 2.5]{DG8} for the classical situation).

\begin{theorem}\label{sconth2}
If $(a_n)$ is $b$-divergent then $t_{(a_n)}(\T) \nsubseteq t^s_{(d_n)}(\T)$.
\end{theorem}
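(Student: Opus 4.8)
The plan is to apply Lemma \ref{sconl1} to one carefully chosen element. The heuristic is that the density hypothesis of that lemma, $\overline{d}\bigl(\bigcup_{j}[n_{s_j},n_{s_j}+b_{s_j+1}-2]\bigr)>0$, is hardest to control when the support of $x$ is sparse; so I would instead make the support of $x$ as large as possible, in which case the union in question swallows a cofinite set of indices and the condition holds trivially.

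Concretely, I would let $x\in\T$ be the element whose canonical representation (Fact \ref{lemmanew}) has $c_1=0$ and $c_n=1$ for every $n\ge 2$. First one verifies this is a genuine canonical representation: $1\le b_n-1$ since $b_n\ge 2$, and since $(a_n)$ is $b$-divergent we have $b_n\to\infty$, so $c_n=1<b_n-1$ for all large $n$, which is the requirement in Fact \ref{lemmanew}. Then $supp_{(a_n)}(x)=\{2,3,4,\dots\}$, which is precisely of the form $\{s_j+1:j\in\N\}$ with $s_j=j$, and $c_n=1$ for every $n$ in the support — exactly the shape demanded by Lemma \ref{sconl1}.

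Next I would check the two remaining hypotheses of Lemma \ref{sconl1}. Since $b_n\to\infty$, the sequence $(b_{s_j+1})_j=(b_{j+1})_j$ diverges to infinity, so $supp_{(a_n)}(x)$ is $b$-divergent. For the density condition, Remark \ref{uconr1} gives $n_{k+1}-n_k=b_{k+1}-1$, hence $n_{s_j}+b_{s_j+1}-2=n_j+(n_{j+1}-n_j+1)-2=n_{j+1}-1$, so
\[
\bigcup_{j=1}^{\infty}\bigl[\,n_{s_j},\,n_{s_j}+b_{s_j+1}-2\,\bigr]=\bigcup_{j=1}^{\infty}[\,n_j,\,n_{j+1}-1\,]=\{k\in\N:k\ge n_1\},
\]
which has natural density $1$; in particular its upper density is positive. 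Thus Lemma \ref{sconl1} applies and yields $x\in t_{(a_n)}(\T)\setminus t^s_{(d_n)}(\T)$, proving $t_{(a_n)}(\T)\nsubseteq t^s_{(d_n)}(\T)$.

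I do not expect a genuine obstacle here; the whole content is already encapsulated in Lemma \ref{sconl1} (and, through it, in \cite[Corollary 3.4]{DI1} for the membership $x\in t_{(a_n)}(\T)$), and the only insight needed is that choosing the full support $c_n\equiv 1$ trivialises the delicate density estimate. The one point worth flagging is that $b$-divergence of $(a_n)$ is used twice and essentially: it makes $c_n/b_n\to 0$ (so $x\in t_{(a_n)}(\T)$) and it makes the support $b$-divergent (so Lemma \ref{sconl1} is applicable); for a merely $b$-unbounded sequence the full-support choice would fail the latter requirement.
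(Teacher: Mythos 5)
Your proposal is correct and follows the paper's own route: both arguments are direct applications of Lemma \ref{sconl1} to an element with $c_n=1$ on a support of the form $\{s_j+1\}$, the only difference being that you take $s_j=j$ (full support, so the union $\bigcup_j[n_{s_j},n_{s_j}+b_{s_j+1}-2]$ is cofinite and has density $1$), whereas the paper takes $s_n=2n$ or $s_n=2n+1$ to secure positive upper density. Your choice is, if anything, slightly cleaner, and your verification of the hypotheses of Lemma \ref{sconl1} (including the canonical-representation requirement $c_n<b_n-1$ for infinitely many $n$, which the paper does not spell out) is sound.
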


\begin{proof}
Let $(a_n)$ be a $b$-divergent arithmetic sequence and $a_k=d_{n_k}$. We choose a sequence $(s_n)$ such that $\overline{d} \big(\bigcup\limits_{j=1}^\infty [n_{s_j},n_{s_j}+b_{s_j+1}-2]\big)>0$ (existence of such a sequence can be assured by considering $s_n=2n$ or $s_n=2n+1$).\\
Let $x\in T$ be such that
$$
supp_{(a_n)}(x)=\{s_j+1 : j\in\N \} \ \mbox{  and  } \ c_n=1 \ \mbox{  for all  } \  n\in supp_{(a_n)}(x).
$$
Since $(a_n)$ is $b$-divergent, it is obvious that $supp_{(a_n)}(x)$ is also $b$-divergent. Therefore, Lemma \ref{sconl1} ensures that $x\in t_{(a_n)}(\T)\setminus t^s_{(d_n)}(\T)$. Thus, $t_{(a_n)}(\T) \nsubseteq t^s_{(d_n)}(\T)$.
\end{proof}

\begin{corollary}\label{sconcoro1}
If $(a_n)$ is $b$-divergent then $t^s_{(a_n)}(\T) \nsubseteq t^s_{(d_n)}(\T)$.
\end{corollary}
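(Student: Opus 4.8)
The plan is to obtain this immediately from Theorem \ref{sconth2} combined with the general containment $t_{(a_n)}(\T)\subseteq t^s_{(a_n)}(\T)$ supplied by Theorem \ref{theoremborel}. No new construction is required: all the work has already been done in the proof of Theorem \ref{sconth2}, where a specific element $x\in\T$ (determined by $supp_{(a_n)}(x)=\{s_j+1:j\in\N\}$ with $c_n=1$ on its support, for a suitable sparsifying sequence $(s_n)$) is produced and shown, via Lemma \ref{sconl1}, to satisfy $x\in t_{(a_n)}(\T)$ while $x\notin t^s_{(d_n)}(\T)$.

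Concretely, I would argue as follows. Since $(a_n)$ is $b$-divergent, Theorem \ref{sconth2} yields an element $x\in t_{(a_n)}(\T)$ with $x\notin t^s_{(d_n)}(\T)$. By Theorem \ref{theoremborel} we have $t_{(a_n)}(\T)\subseteq t^s_{(a_n)}(\T)$, so $x\in t^s_{(a_n)}(\T)$. Thus $x$ is an element of $t^s_{(a_n)}(\T)$ that does not belong to $t^s_{(d_n)}(\T)$, which is exactly the assertion $t^s_{(a_n)}(\T)\nsubseteq t^s_{(d_n)}(\T)$. There is essentially no obstacle here; the only subtlety — verifying that the witness $x$ simultaneously lies in $t_{(a_n)}(\T)$ (equivalently in its statistical enlargement) and escapes $t^s_{(d_n)}(\T)$ because the orbit $\{d_ix\}$ stays bounded away from $0$ on a set of positive upper density — is precisely what Lemma \ref{sconl1} and Theorem \ref{sconth2} already establish.
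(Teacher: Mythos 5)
Your argument is correct and is exactly the paper's own proof: Theorem \ref{sconth2} supplies a witness $x\in t_{(a_n)}(\T)\setminus t^s_{(d_n)}(\T)$, and the inclusion $t_{(a_n)}(\T)\subseteq t^s_{(a_n)}(\T)$ promotes it to a witness for the non-inclusion of the statistical subgroups. No differences worth noting.
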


\begin{proof}
Since $t_{(a_n)}(\T)\subseteq t^s_{(a_n)}(\T)$, the proof follows directly from Theorem \ref{sconth2}.
\end{proof}

\begin{proposition}\label{sconth3}
There exists an arithmetic sequence $(a_n)$ such that $(a_n)$ is not $b$-bounded and $t^s_{(d_n)}(\T) \nsubseteq t^s_{(a_n)}(\T)$.
\end{proposition}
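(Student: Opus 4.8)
The plan is to produce an arithmetic sequence $(a_n)$ that is not $b$-bounded together with an $x\in\T$ for which $d_nx\to 0$ statistically while $a_kx\not\to 0$ statistically; this gives $x\in t^s_{(d_n)}(\T)\setminus t^s_{(a_n)}(\T)$. Note first that the construction is forced to satisfy $\overline{d}(\{n_k:k\in\N\})=0$, since otherwise Lemma \ref{sconl01} would already give $t^s_{(d_n)}(\T)\subseteq t^s_{(a_n)}(\T)$. So, in the spirit of Proposition \ref{sconp3}, I would start from a set $A=\bigcup_{j=1}^\infty[g_j,h_j]$ with $g_1=1$, $|g_{j+1}-h_j|\to\infty$, now arranged so that $d(A)=0$: give $[g_j,h_j]$ length $p_j+q_j+1$ and the gap $g_{j+1}-h_j+1$ size $G_j$, where $p_j\to\infty$ slowly (say $p_j=j$), $q_j$ grows with $q_j/(p_j+q_j)\to 1$ (say $q_j=2^j$), and $G_j$ overwhelms $\sum_{i\le j}(p_i+q_i)$ (say $G_j=4^j$); then $d(A)=0$. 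Writing $A=\{1=n_0<n_1<\cdots\}$ with $n_{s_j}=h_j$, put $b_{k+1}=n_{k+1}-n_k+1$ and let $(a_n)$ be the corresponding arithmetic sequence, so that $a_k=d_{n_k}$, $\{n_k:k\in\N\}=A$, and $(a_n)$ is not $b$-bounded because $b_{s_j+1}=G_j\to\infty$. Within each interval the block-indices split into $p_j$ ``transition'' blocks and $q_j$ ``core'' blocks, all of length $1$ (since $b_{k+1}=2$ there), followed by one ``long'' block (block $s_j$, of length $G_j-1$).

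I would specify $x$ through its canonical coefficients (Fact \ref{lemmanew}): $c_n=0$ everywhere except at the core positions, where $c_n$ alternates between $0$ and $1$. Everything is then read off from the recursion $\{a_kx\}=\tfrac{1}{b_{k+1}}\bigl(c_{k+1}+\{a_{k+1}x\}\bigr)$. For $k$ well inside a core segment, $\{a_kx\}$ differs from the dyadic sum $\sum_{l\ge1}c_{k+l}2^{-l}\in\{\tfrac13,\tfrac23\}$ by an amount that is exponentially small in the distance from $k$ to the end of that segment, so $\|a_kx\|\ge\tfrac14$ there; since the core blocks have positive upper density among all block-indices, $\overline{d}(\{k:\|a_kx\|\ge\tfrac14\})>0$, and hence $a_kx\not\to 0$ statistically, i.e.\ $x\notin t^s_{(a_n)}(\T)$. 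Along a transition segment, where $c_n=0$ and $b_n=2$, the same recursion makes $\{a_kx\}$ halve at each step, so $\{a_kx\}$ decays geometrically as one moves leftward from a core segment toward the long block preceding it; this is what will make $x\in t^s_{(d_n)}(\T)$.

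To prove the latter, fix $\eps>0$ and estimate $|\{n\le N:\|d_nx\|\ge\eps\}|$ by grouping the $d$-indices according to the block $[n_k,n_{k+1}-1]$ that contains them (recall $d_{n_k+r-1}=ra_k$). Each transition or core block has length $1$ and hence contributes at most one bad $d$-index; the core ones dominate, and there are only $\sum_{j\le J}q_j=O(2^J)$ of them up to the $J$-th interval, which is $o(N)$ because $N$ is of order $\sum_{j\le J}G_j\approx 4^J$. The real difficulty, and the main obstacle, is the long block $[n_{s_j},n_{s_j+1}-1]$ of length $G_j-1$: naively $\{a_{s_j}x\}$ is of order (the core value $\approx\tfrac13$ of interval $j+1$)$/G_j$, and then $\|ra_{s_j}x\|=r\{a_{s_j}x\}$ would be $\approx\tfrac13$ for $r$ near $G_j$, making a positive fraction of that long block $\eps$-bad, which would be fatal. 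This is precisely the purpose of the transition segments of length $p_j\to\infty$: the $p_{j+1}$ blocks with $c=0,\ b=2$ preceding the core of interval $j+1$ damp that core value by a further factor $2^{p_{j+1}}$, so in fact $\{a_{s_j}x\}$ is of order $2^{-p_{j+1}}G_j^{-1}$ and $\|ra_{s_j}x\|\le(G_j-1)\{a_{s_j}x\}$ is $<\eps$ for every $r<G_j$ once $2^{p_{j+1}}>1/\eps$, i.e.\ for all but finitely many $j$. The finitely many remaining long blocks, plus the $O_\eps(J)$ bad indices from the first few transition blocks of each interval (those whose $\{a_kx\}$ has not yet decayed below $\eps$), again total $o(N)$; hence $d(\{n:\|d_nx\|\ge\eps\})=0$ for every $\eps>0$, so $x\in t^s_{(d_n)}(\T)$, completing the argument.
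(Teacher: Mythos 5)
Your proposal is correct and follows essentially the same route as the paper's proof: sparse, rapidly growing ratios $b_{s_j+1}$ create long $d$-blocks of asymptotically full density, the support of $x$ sits on a positive-upper-density set of $a$-indices where the ratios are small (so $a_kx\not\to 0$ statistically), and a growing buffer of zero coefficients placed immediately after each large ratio forces $\|ra_{s_j}x\|\le 2r/(b_{s_j+1}2^{p_{j+1}})\le 2^{1-p_{j+1}}\to 0$ uniformly in $r<b_{s_j+1}$, which handles the long $d$-blocks. The only substantive difference is that the paper uses ratio $3$ with constant digit $1$ on the support and invokes \cite[Corollary 3.9]{DG2} to get $x\notin t^s_{(a_n)}(\T)$, whereas you use ratio $2$ with alternating digits and verify this directly from the binary expansion ($\{a_kx\}\approx\tfrac13$ or $\tfrac23$); both are sound.
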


\begin{proof}
We consider $A=\{s_1<s_2<\ldots<s_n<\ldots\}\subseteq\N$ such that $3$ divides $s_n$ for each $n\in\N$ and $|s_{n+1}-s_n|\geq n$. Since $(s_n)$ is a lacunary sequence (i.e., $|s_{n+1}-s_n|\to\infty$), it is well known that $d(A)=0$. For each $n\in\N$, we define
$$
B=\bigcup\limits_{n=1}^\infty [g_n,h_n] \ \mbox{  where   } \ g_n=s_n+\frac{s_{n+1}-s_n}{3} \  \mbox{  and  }  \  h_n=s_n+\frac{2(s_{n+1}-s_n)}{3}.
$$
Therefore, it is easy to see that $\overline{d}(B)\geq \frac{1}{3}\overline{d}(\N\setminus A)=\frac{1}{3}$.

For each $i\in\N$, we define
$$
b_i=
  \begin{cases}
   (j-2)(s_{j+1}-s_j) & \text{if} \ i=s_j+1, \\
   3 & \text{otherwise}.
  \end{cases}
$$
Now, the corresponding arithmetic sequence $(a_n)$ is defined by
$$
a_0=1 \mbox{  and  }  a_{n+1}=b_{n+1}a_n.
$$
Therefore, from the construction of $(b_n)$, it follows that $(a_n)$ is not $b$-bounded and $a_k=d_{n_k}$. Also observe that
$$
n_{s_j+1}-n_{s_j}=b_{s_j+1} -1=(j-2)(s_{j+1}-s_j)\ \mbox{   and   } \ n_{s_{j+1}}-n_{s_j+1}\leq 2(s_{j+1}-s_j).
$$
Consequently, we have $d\big(\bigcup\limits_{j=1}^\infty [n_{s_j},n_{s_j}+b_{s_j+1}-2]\big)=1$.

Let $x\in T$ be such that
$$
supp_{(a_n)}(x)=B \ \mbox{  and  } \ c_n=1 \ \mbox{  for all  } \  n\in supp_{(a_n)}(x).
$$
Since $b_n=3$ for each $n\in B$, we have $supp_{(a_n)}(x)$ is $b$-bounded. Also observe that $\overline{d}(supp_{(a_n)}(x)\setminus supp^b_{(a_n)}(x))=\overline{d}(B)>0$. Therefore, \cite[Corollary 3.9]{DG2} ensures that $x\notin t^s_{(a_n)}(\T)$.

Now observe that for each $i\in \bigcup\limits_{j=1}^\infty [n_{s_j},n_{s_j}+b_{s_j+1}-2]$ we have $i=n_{s_j}+r-1$ for some $r\in [1,b_{s_j+1}]$. Therefore for all $i\in B$, we have
$$
\{d_ix\}=\{d_{n_{s_j}+r-1}x\} = \{ra_{s_j}x\} = r a_{s_j} \sum\limits_{i\in B, i\geq g_j}^\infty \frac{1}{a_{i}} \leq  \frac{2r a_{s_j}}{a_{g_j}} \leq \frac{2r}{2^{\frac{(s_{j+1}-s_j)}{3}}},
$$

$$
\Rightarrow \lim\limits_{i\in B, i\to\infty} \{d_ix\}\leq \lim\limits_{j\to\infty} \frac{2j(s_{j+1}-s_j)}{2^{\frac{(s_{j+1}-s_j)}{3}}}=0.
$$
Since $d(B)=1$, we conclude that $x\in t^s_{(d_n)}(\T)$. Thus, $t^s_{(d_n)}(\T) \nsubseteq t^s_{(a_n)}(\T)$.
\end{proof}

\section{Cardinality related observation}
In \cite{DDB}, it was shown that for each arithmetic sequence $(a_n)$ the subgroup $t^s_{(a_n)}(\T)$ is always different from $t_{(a_n)}(\T)$ and it has cardinality $\mathfrak{c}$. After that the following open problem was considered (the sequence $(\zeta_n)$ is defined in Eq (\ref{eqnonarith})).

\begin{question} \label{ques2}
Compute $t^s_{(\zeta_n)}(\T)$. Is it countable ? Is it distinct from $\Q/\Z$?
\end{question}

Here we answer Question \ref{ques2} in a more general form considering the sequence $(d_n)$ associated with $(a_n)$. In view of Theorem B \cite{DDB}, Theorem \ref{sconth1} as well as Proposition \ref{sconp2}, we already have instances when $|t^s_{(d_n)} (\T)|=\mathfrak{c}$. However as we will see later, it is not always possible to draw any conclusion about $t^s_{(d_n)} (\T)$ from $t^s_{(a_n)} (\T)$. Indeed it is possible to compute the cardinality of $t^s_{(d_n)} (\T)$ independently which in turn would provide a negative answer to the aforesaid open question.

\begin{theorem}\label{sconth04}
Let $(a_n)$ be an arithmetic sequence such that for each $m\in\N$, $\lim\limits_{n\to\infty}\frac{\sum\limits_{i=0}^{m-1} (b_{n-i}-1)}{\sum\limits_{i=1}^n (b_i-1)}=0$. Then $|t^s_{(d_n)}(\T)|=\mathfrak{c}$.
\end{theorem}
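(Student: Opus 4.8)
The plan is to produce an injection of $2^{\N}$ into $t^s_{(d_n)}(\T)$; since $|t^s_{(d_n)}(\T)|\le|\T|=\mathfrak c$ is automatic, this gives the equality. I would first fix, \emph{before} anything else, a very sparse set $S=\{m_1<m_2<\cdots\}\subseteq\N$ of ``free coordinates'' chosen so that $n_{m_j}\ge 2\,n_{m_{j-1}}$ for all $j$ (possible because $n_k\to\infty$, as $b_i-1\ge1$). For $\delta=(\delta_j)\in\{0,1\}^{\N}$ let $x_\delta\in\T$ be the element whose canonical coefficients (Fact \ref{lemmanew}) are $c_{m_j}=\delta_j$ and $c_i=0$ for $i\notin S$. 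Since $\N\setminus S$ is infinite we have $c_i=0<b_i-1$ infinitely often, so this is the genuine canonical representation, and by uniqueness in Fact \ref{lemmanew} distinct $\delta$ give distinct $x_\delta$; hence the image has cardinality $\mathfrak c$. It remains to show $x_\delta\in t^s_{(d_n)}(\T)$ for every $\delta$.

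For the membership I would use Remark \ref{uconr1}: an index $m$ lies in the ``$k$-th block'' when $m=n_k+r-1$ with $1\le r\le b_{k+1}-1$, and then $\|d_m x_\delta\|=\|r a_k x_\delta\|=\|r\theta_k\|$, where $\theta_k:=\{a_k x_\delta\}=\sum_{i>k}c_i a_k/a_i\in[0,1)$ (the terms $i\le k$ contribute integers). If $k'=k'(k,\delta)$ is the least index $>k$ with $c_{k'}\ne0$ — if there is none then $\theta_k=0$ and the whole block is harmless — then $c_i=0$ for $k<i<k'$ together with Eq (\ref{eqsum}) gives $\theta_k\le (c_{k'}+1)a_k/a_{k'}\le 2/(b_{k+1}\cdots b_{k'})$, whence for every $m$ in the $k$-th block
\[
\|d_m x_\delta\|\le r\,\theta_k\le (b_{k+1}-1)\cdot\frac{2}{b_{k+1}b_{k+2}\cdots b_{k'}}<\frac{2}{b_{k+2}\cdots b_{k'}}\le 2^{\,2-(k'-k)},
\]
using $b_i\ge2$ and $\|r\theta_k\|\le r\|\theta_k\|\le r\theta_k$. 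Thus, given $\eps>0$, if $C_\eps$ is the least integer with $2^{2-C_\eps}<\eps$, then $\|d_m x_\delta\|<\eps$ whenever $m$ lies in a block $k$ with $k'-k\ge C_\eps$. The only blocks that can violate $\|d_m x_\delta\|<\eps$ are therefore those $k$ lying within distance $C_\eps$ below an element of $S$, i.e.\ $k\in\bigcup_j[m_j-C_\eps+1,\,m_j-1]$.

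The crux is to show that these ``bad'' blocks occupy a set of $d_n$-indices of natural density $0$. Through the $K$-th block they contribute at most $\sum_{j:\,m_j\le K}\sum_{i=0}^{C_\eps-2}(b_{m_j-i}-1)$ indices, while the total number through the $K$-th block is $n_K=\sum_{i=1}^{K}(b_i-1)+n_0$. Here the hypothesis enters: applied with $m=C_\eps$ it gives $\sum_{i=0}^{C_\eps-1}(b_{n-i}-1)\le\eta_n\sum_{i=1}^{n}(b_i-1)$ with $\eta_n\to0$, so the $j$-th summand is at most $\eta_{m_j}n_{m_j}$. Now the geometric growth $n_{m_j}\ge 2\,n_{m_{j-1}}$ built into $S$ yields $\sum_{j\le J}n_{m_j}\le 2\,n_{m_J}$, and splitting $\sum_{j\le J}\eta_{m_j}n_{m_j}$ at a large fixed index $J_0$ shows $\limsup_{J}\frac1{n_{m_J}}\sum_{j\le J}\eta_{m_j}n_{m_j}\le 2\sup_{j>J_0}\eta_{m_j}\to0$ as $J_0\to\infty$. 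Passing from the subsequence $(n_K)$ to all $n$ is harmless since $n_{K+1}/n_K\to1$ (the hypothesis with $m=1$). Hence $d(\{m:\|d_m x_\delta\|\ge\eps\})=0$ for every $\eps>0$, i.e.\ $d_n x_\delta\to0$ statistically, and the proof is complete.

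The step I expect to be the real obstacle is the density estimate for the bad blocks: one must simultaneously use the hypothesis (to bound any fixed number $C_\eps$ of consecutive block-lengths near $m_j$ by $o(n_{m_j})$) and engineer the sparseness of $S$ (the doubling of $n_{m_j}$) so that the \emph{accumulation} of bad blocks over all $j\le J$ is still $o(n_{m_J})$ — and this must work for every threshold $\eps$ at once with a single choice of $S$, which is why the doubling of the partial sums, rather than mere decay of $\eta_{m_j}$, is the right device. The rest — the canonical-representation bookkeeping for the cardinality and the per-block estimate via Eq (\ref{eqsum}) — is routine.
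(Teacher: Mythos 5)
Your argument is correct and shares the paper's skeleton: a $\{0,1\}^{\N}$-indexed family of elements supported on one fixed sparse set of coordinates, injectivity via uniqueness of the canonical representation of Fact \ref{lemmanew}, the per-block bound $\|d_i x\|\le r\{a_k x\}\le 2^{\,2-(k'-k)}$ obtained exactly as in the paper from Remark \ref{uconr1} and Eq (\ref{eqsum}), and, for each $\eps$, a density-zero ``bad'' set consisting of the $C_\eps$ blocks sitting just below each support coordinate. Where you genuinely differ is at the crux you yourself single out: how to prove that the accumulated bad blocks have density zero with a single, $\eps$-independent choice of sparse set. The paper defines its support set $(s_j)$ by a recursion that forces $n_{s_{j+1}}\ge j\sum_{i=1}^{j}\sum_{t=0}^{i-1}(b_{s_i+1-t}-1)$ --- taking $i$ blocks below $s_i$ at stage $i$, so that one recursion serves every $m$ simultaneously and yields the bound $\tfrac{1}{j-1}$ for the accumulated part --- and it invokes the hypothesis of the theorem only once, to kill the contribution of the final group of $m$ blocks below the current $s_j$. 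You instead impose the $\eps$-independent doubling condition $n_{m_j}\ge 2n_{m_{j-1}}$, apply the hypothesis termwise to bound each group by $\eta_{m_j}n_{m_j}$ with $\eta_{m_j}\to 0$, and sum via $\sum_{j\le J}n_{m_j}\le 2n_{m_J}$ plus a split of the sum at a large fixed $J_0$. The two devices are interchangeable here; yours makes more transparent where the hypothesis is actually used (at every support point, for each fixed $m=C_\eps$), at the cost of the $\limsup$ bookkeeping that the paper hides inside the recursive construction of $(s_j)$. One small caveat: your count of bad indices ``through the $K$-th block'' over $j$ with $m_j\le K$ omits the partial interval coming from the first $m_{J+1}>K$ when $K\ge m_{J+1}-C_\eps+1$; this is harmless (bound $N$ below by $n_{m_J-C_\eps+1}\ge n_{m_J}(1-\eta_{m_J})$, or absorb it with one more application of the hypothesis at $n=K$), but it should be said.
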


\begin{proof}
Let $(a_n)$ be an arithmetic sequence and $a_k=d_{n_k}$. We construct a sequence $(s_j)$ such that
$$
s_1=1 \ \mbox{   and   } \ s_{j+1}=\min \{r\in\N\ :  \ r> s_j+j+1 \ \mbox{ and } \  \ n_r\geq j  \sum\limits_{i=1}^j \sum\limits_{t=0}^{i-1} (b_{s_i+1-t}-1)\}.
$$

Let $\varepsilon>0$ be given. We choose $m\in\N\setminus \{1,2\}$ such that $\frac{1}{2^{m-2}}<\varepsilon$. Note that for each $j\geq 1$ we have $n_{s_{(j+1)}}\geq j  \sum\limits_{i=1}^j \sum\limits_{t=0}^{i-1} (b_{s_i+1-t}-1)$ and for each $j\geq m$, we have $n_{s_{(j+1)}}> n_{s_j+m+1}$. Set
$$
A=[1,n_{s_m+1-m}-1]\cup \bigcup\limits_{j=m}^\infty [n_{s_j+1-m},n_{s_j+1}-1].
$$
Therefore,
\begin{eqnarray*}
d(A) &\leq &\limsup\limits_{n\to\infty} \frac{|A\cap [1,n]|}{n} \\ &=& \lim\limits_{j\to\infty} \frac{|A\cap [1,n_{s_j+1}-1]|}{n_{s_j+1}-1} \\ &=& \lim\limits_{j\to\infty} \frac{|A\cap [1,n_{s_j+1-m}-1]|}{n_{s_j+1}-1} + \lim\limits_{j\to\infty} \frac{|A\cap [n_{s_j+1-m},n_{s_j+1}-1]|}{n_{s_j+1}-1} \\ &=& \lim\limits_{j\to\infty} \frac{|A\cap [1,n_{s_m+1-m}-1]|}{n_{s_j+1}-1} + \lim\limits_{j\to\infty} \frac{|A\cap [n_{s_m+1-m},n_{s_{j-1}+1}-1]|}{n_{s_j+1}-1} \\ & \ & \ + \lim\limits_{j\to\infty} \frac{\sum\limits_{i=0}^{m-1}(b_{s_j-i+1}-1)}{n_{s_j+1}-1} \\ &\leq& 0 + \lim\limits_{j\to\infty} \frac{\sum\limits_{i=1}^{j-1} \sum\limits_{t=0}^{i-1} (b_{s_i+1-t}-1)\}}{n_{s_j}}+\lim\limits_{j\to\infty} \frac{\sum\limits_{i=0}^{m-1}(b_{s_j-i+1}-1)}{n_1-1+\sum\limits_{i=2}^{s_j+1} (b_i-1)} \\ &\leq& \lim\limits_{j\to\infty} \frac{1}{j-1}+0 =0.
\end{eqnarray*}
Consider $x\in T$ such that
$$
supp_{(a_n)}(x)\subseteq \{s_j:j\in\N\}.
$$
Now observe that for each $i\in \N\setminus A$ we have $i=n_{k}+r-1$ for some $r\in [1,b_{k+1}-1]$ and $k\notin \bigcup\limits_{j=m+1}^\infty [s_j-m+1,s_j]$, i.e., $k,k+1,k+2,\ldots,k+m-1\notin supp_{(a_n)}(x)$. Then for all $i\in \N\setminus A$, we have
\begin{eqnarray*}
\{d_ix\}=\{d_{n_{k}+r-1}x\} &=& \{ra_{k}x\} \\ &\leq&  r a_{k} \sum\limits_{i=k+m}^\infty \frac{c_i}{a_{i}} \\ &\leq&  \frac{r a_{k}}{a_{k+m-1}} \leq \frac{r}{b_{k+1}}\frac{a_{k+1}}{a_{k+m-1}}\leq\frac{1}{2^{m-2}}<\varepsilon.
\end{eqnarray*}
Therefore we can conclude that $x\in t^s_{(d_n)}(\T)$.

Let us fix a sequence $\xi = (z_i)\in \{0,1\}^\N$ and define $B^\xi = \bigcup\limits_{k=1}^{\infty}s_{2k+ z_k}$. In other words, this subset $B^\xi $ of $(s_j)$ is obtained by taking at each stage $k$ either $s_{2k}$ or $s_{2k+1}$ depending on the choice imposed by $\xi$. As obviously $B^\xi \ne B^\eta$
for distinct $\xi, \eta \in \{0,1\}^\N$, this provides an injective map given by
$$
\{0,1\}^\N \ni \xi \to B^\xi.
$$
Now for each $x^\xi\in\T$ with $supp_{(a_n)}(x^\xi)=B^\xi$, by the same argument we have $x^\xi\in t^s_{(d_n)}(\T)$. Since $|\{0,1\}^\N| = \mathfrak c$, we conclude that $|t^s_{(d_n)}(\T)|=\mathfrak{c}$.
\end{proof}

\begin{corollary}
Let $(\zeta_n)$ be the sequence defined in Eq (\ref{eqnonarith}). Then $|t^s_{(\zeta_n)}(\T)|=\mathfrak{c}$.
\end{corollary}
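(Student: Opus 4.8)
The plan is to read off the Corollary directly from Theorem \ref{sconth04}. Recall from the discussion after Eq (\ref{nonarithdef}) that $(\zeta_n)$ is precisely the non-arithmetic sequence $(d_n)$ attached to the arithmetic sequence $a_n = n!$. So it suffices to verify that this particular $(a_n)$ satisfies the hypothesis of Theorem \ref{sconth04}, namely that for every fixed $m\in\N$,
$$
\lim_{n\to\infty}\frac{\sum_{i=0}^{m-1}(b_{n-i}-1)}{\sum_{i=1}^{n}(b_i-1)}=0,
$$
and then invoke that theorem to conclude $|t^s_{(\zeta_n)}(\T)|=|t^s_{(d_n)}(\T)|=\mathfrak c$.

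The first step is the elementary computation of the ratios. For $a_n=n!$ we have $b_n=a_n/a_{n-1}=n$ for all relevant $n$ (using the convention $b_1:=a_1$), so $b_n-1=n-1$. Hence the numerator in the displayed limit equals $\sum_{i=0}^{m-1}(n-i-1)=m(n-1)-\binom{m}{2}$, which for fixed $m$ grows \emph{linearly} in $n$, while the denominator equals $\sum_{i=1}^{n}(i-1)=\binom{n}{2}=\frac{n(n-1)}{2}$, which grows \emph{quadratically} in $n$. Therefore the quotient is $O(1/n)$ and tends to $0$ as $n\to\infty$, for each $m$. This is exactly the condition required by Theorem \ref{sconth04}, so the theorem applies verbatim.

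I do not expect any real obstacle here; the only thing needing minor care is bookkeeping about indexing. One should check once, by directly expanding $\{r a_k : 1\le r < b_{k+1}\}=\{r\cdot k!:1\le r\le k\}$ for $k=1,2,3,\dots$ and matching against $1,2,4,6,12,18,24,\dots$, that $(d_n)$ for $a_n=n!$ does coincide with $(\zeta_n)$ (the harmless coincidence $0!=1!=1$ is the source of the potential off-by-one confusion, but it affects only finitely many initial terms and thus neither the equality of the two sequences in the tail nor the asymptotics of the ratio above). Since Theorem \ref{sconth04} depends only on that asymptotic condition, the conclusion $|t^s_{(\zeta_n)}(\T)|=\mathfrak c$ follows immediately, which in particular answers Question \ref{ques2} negatively: $t^s_{(\zeta_n)}(\T)$ is uncountable, hence properly larger than $\Q/\Z=t_{(\zeta_n)}(\T)$.
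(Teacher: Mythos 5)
Your proposal is correct and follows the same route as the paper, which simply invokes Theorem \ref{sconth04} with $a_n=n!$; you merely spell out the verification of the hypothesis (with $b_n=n$ the numerator grows linearly and the denominator quadratically), which the paper leaves implicit. No further comment is needed.
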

\begin{proof}
The proof follows directly by considering $a_n=n!$ in Theorem \ref{sconth04}.
\end{proof}

\begin{corollary}
For any arithmetic sequence $(a_n)$ satisfying the condition of Theorem \ref{sconth04}, $t_{(d_n)}(\T)\subsetneq t^s_{(d_n)}(\T)$ and $|t^s_{(d_n)}(\T)\setminus t_{(d_n)}(\T)|=\mathfrak{c}$.
\end{corollary}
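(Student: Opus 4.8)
The plan is to read the statement off as an immediate consequence of Theorem \ref{sconth04} together with the structural facts about $t_{(d_n)}(\T)$ recalled in the introduction. The key input is that, by \cite{DG8}, for every arithmetic sequence $(a_n)$ the subgroup $t_{(d_n)}(\T)$ is a countable torsion subgroup of $\T$; since every torsion element of $\T=\R/\Z$ is rational, this means $t_{(d_n)}(\T)\subseteq\Q/\Z$ and $|t_{(d_n)}(\T)|=\aleph_0$. On the other side, Theorem \ref{theoremborel} gives the inclusion $t_{(d_n)}(\T)\subseteq t^s_{(d_n)}(\T)$ (valid for any sequence of integers), and Theorem \ref{sconth04}, applicable because $(a_n)$ satisfies the stated hypothesis, gives $|t^s_{(d_n)}(\T)|=\mathfrak{c}$.

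Granting these inputs, the proper inclusion $t_{(d_n)}(\T)\subsetneq t^s_{(d_n)}(\T)$ is immediate, since a countable set cannot coincide with a set of cardinality $\mathfrak{c}$. For the cardinality of the difference I would argue as follows: from $t^s_{(d_n)}(\T)=\big(t^s_{(d_n)}(\T)\setminus t_{(d_n)}(\T)\big)\cup t_{(d_n)}(\T)$, with the set on the left of size $\mathfrak{c}$ and $t_{(d_n)}(\T)$ countable, the set $t^s_{(d_n)}(\T)\setminus t_{(d_n)}(\T)$ must have cardinality $\mathfrak{c}$; the reverse bound $\leq\mathfrak{c}$ is trivial from $|\T|=\mathfrak{c}$. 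Equivalently, one can observe directly that the $\mathfrak{c}$ distinct elements $x^\xi$ constructed in the proof of Theorem \ref{sconth04} each have infinite support $supp_{(a_n)}(x^\xi)=B^\xi$, hence are non-torsion and therefore lie outside $\Q/\Z\supseteq t_{(d_n)}(\T)$, so they already witness $|t^s_{(d_n)}(\T)\setminus t_{(d_n)}(\T)|=\mathfrak{c}$.

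There is no genuine obstacle in this corollary: its whole content is borrowed from Theorem \ref{sconth04}. The only points deserving a line of care are the elementary cardinal-arithmetic fact that deleting a countable set from a set of size $\mathfrak{c}$ leaves a set of size $\mathfrak{c}$, and the (equally routine, but worth stating) invocation of \cite{DG8} for the countability of $t_{(d_n)}(\T)$, which is what makes the gap between the two subgroups as large as possible.
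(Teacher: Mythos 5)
Your main argument is correct and is essentially the paper's own proof: the paper justifies the corollary by citing \cite[Corollary 2.4(iii)]{DG8} for the countability of $t_{(d_n)}(\T)$ together with Theorem \ref{sconth04}, which is exactly the combination (plus the inclusion from Theorem \ref{theoremborel} and the cardinal-arithmetic step) that you spell out. One caution about your optional ``equivalently'' aside: infinite support does not by itself imply non-torsion (e.g.\ for $a_n=2^n$ the rational $1/3$ has infinite support), so that shortcut would additionally need the observation that the supports $B^\xi$ have gaps tending to infinity while a rational with non-terminating expansion has support with bounded gaps; since the aside is not needed, this does not affect the validity of your proof.
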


\begin{proof}
Follows directly from \cite[Corollary 2.4, (iii)]{DG8}  and Theorem \ref{sconth04}.
\end{proof}

\noindent{\textbf{Acknowledgement:}} The first author as PI and the second author as RA are thankful to SERB(DST) for the CRG project (No. CRG/2022/000264) and the first author is also thankful to SERB for the MATRICS project (No. MTR/2022/000111) during the tenure of which this work has been done.\\

\end{document}